\documentclass{article}
\usepackage[utf8]{inputenc}

\title{Subordination Algebras as Semantic Environment of Input/Output Logic}
\usepackage{graphicx}
\usepackage{amsmath}
\usepackage{amssymb}
\usepackage{amsfonts}
\usepackage{amsthm}
\usepackage{url}
\usepackage[all]{xy}
\usepackage{makeidx}
\usepackage{latexsym}
\usepackage{empheq} 
\usepackage{verbatim}
\usepackage{stmaryrd}
\usepackage{enumitem}
\usepackage{bussproofs}
\usepackage{boxedminipage}
\usepackage{hyperref}
\usepackage{ccaption}
\usepackage{cancel}
\usepackage{multicol}
\usepackage{xcolor}
\usepackage{proof}
\usepackage{longtable}
\usepackage{marginnote}
\newcommand{\bba}{\mathbb{A}}
\newcommand{\bbas}{\mathbb{A}^\delta}
\newcommand{\nomj}{\mathbf{j}}
\newcommand{\nomk}{\mathbf{k}}\newcommand{\nomi}{\mathbf{i}}
\newcommand{\cnomm}{\mathbf{m}}
\newcommand{\cnomn}{\mathbf{n}}
\newcommand{\cnomo}{\mathbf{o}}
\newcommand{\jty}{J^{\infty}}
\newcommand{\mty}{M^{\infty}}
\newcommand{\adnote}[1]{\textcolor{blue}{ADD:#1}}

\newcommand{\mpnote}[1]{\textcolor{red}{MP:#1}}

\newcommand{\marginkmnote}[1]{\marginnote{\textcolor{green}{KM:#1}}}
\newcommand{\afnote}[1]{\textcolor{orange}{AF:#1}}




\newtheorem{theorem}{Theorem}[section]
\newtheorem{lemma}[theorem]{Lemma}

\newtheorem{definition}[theorem]{Definition}
\newtheorem{corollary}[theorem]{Corollary}
\newtheorem{proposition}[theorem]{Proposition}
\newtheorem{remark}[theorem]{Remark}

\author{Andrea De Domenico  \and
Ali Farjami  \and
Krishna Manoorkar  \and
Alessandra Palmigiano  \and
Mattia Panettiere  \and
Xiaolong Wang }

\date{}

\begin{document}

\maketitle
\begin{abstract}
  We establish a novel connection between two research areas in non-classical logics which have been  developed independently of each other so far: on the one hand,  {\em input/output logic},  introduced within a research program developing  logical formalizations of normative reasoning in philosophical logic and AI; on the other hand, {\em subordination algebras}, investigated in the context of a research program integrating topological, algebraic, and duality-theoretic techniques in the study of the semantics of modal logic. Specifically, we propose that the basic framework of input/output logic, as well as its extensions, can be given formal semantics  on (slight generalizations of) subordination algebras. The existence of this interpretation brings benefits to both research areas: on the one hand, this connection allows for a novel conceptual understanding of  subordination algebras as  mathematical  models of the properties and  behaviour of  norms; on the other hand, thanks to the well developed connection between subordination algebras and modal logic, the output operators in input/output logic can be given a new formal representation as modal operators, whose properties can be explicitly axiomatised in a suitable language, and be systematically  studied by means of mathematically established
  and powerful tools.  
  %
  \end{abstract}

\section{Introduction} 
Input/output logic \cite{Makinson00} has been introduced  as a formal framework for modelling the interaction between logical inferences and other agency-related notions  such as  conditional obligations, goals, ideals, preferences, actions, and beliefs. 
This framework has been applied mainly in the context of the formalization of normative systems in philosophical logic and AI. Although, initially, this framework was intended ``not [for] studying some kind of non-classical logic, but [as] a way of using the classical one'', its generality and versatility makes it very suitable to support a range of  enhancements in its expressiveness, such as those brought about by the addition of modal operators. Moreover, recently, there has been an interest in studying the interaction between the agency-related notions mentioned above with various forms of {\em nonclassical} reasoning \cite{parent2014intuitionistic,stolpe2015concept}. This interest has  contextually motivated the introduction of algebraic and proof-theoretic methods in the study of input/output logic   \cite{sun2018proof}.

In this paper, we contribute to the latter  research direction in the mathematical background of input/output logic by introducing an algebraic semantics for it, based on (generalizations of) {\em subordination algebras} \cite{bezhanishvili2017irreducible}. These can be defined as tuples $(A, \prec)$ such that $A$ is a Boolean algebra and $\prec$ is a binary relation on $A$  such that the direct (resp.~inverse) image of each element $a\in A$ is a filter (resp.~an ideal) of $A$. Subordination algebras  are equivalent presentations of pre-contact algebras \cite{dimov2005topological} and quasi-modal algebras \cite{celani2001quasi,celani2016precontact}. Since their introduction, subordination algebras have been systematically connected with various modal algebras (i.e.~Boolean algebras expanded with semantic modal operators). This has made it possible to endow  various modal languages with algebraic semantics  based on subordination algebras, and use these languages to axiomatize the properties of these subordination algebras. In particular, Sahlqvist-type canonicity for modal and tense formulas on subordination algebras has been studied in \cite{de2020subordination} using topological techniques;  in \cite{de2021slanted}, using algebraic techniques, the canonicity result  of \cite{de2020subordination} was strengthened and captured within the more general notion of canonicity in the context of {\em slanted algebras}, which was established using the tools of {\em unified correspondence theory} \cite{conradie2014unified,conradie2019algorithmic,conradie2020constructive}. Slanted algebras are based on general lattices, and encompass variations and generalizations of subordination algebras such as those very recently introduced by Celani in \cite{celani2020subordination}, which are based on distributive lattices, and for which Celani develops  duality-theoretic and correspondence-theoretic results.

\paragraph{Structure of the paper.} In Section \ref{sec:prelim}, we collect basic definitions and facts about the abstract logical framework in which we are going to develop our results, input/output logics as embedded in this framework, the general environment of proto-subordination algebras and their properties, canonical extensions and slanted algebras. In Section \ref{sec: proto and slanted}, we associate slanted algebras to proto-subordination algebras with certain properties, and characterize their further properties  in terms of the validity of modal inequalities on their associated slanted algebras. In Section \ref{sec: applications}, we use the characterizations presented in the previous section to provide an axiomatic modal characterization of the output operators of input/output logic (cf.~Proposition \ref{prop:output}), and to obtain Celani's dual characterization results for subordination lattices as consequences of standard modal correspondence (cf.~Proposition \ref{prop: celani}). We conclude in Section \ref{sec:conclusions}.
\section{Preliminaries}
\label{sec:prelim}
\subsection{Selfextensional logics}\label{ssec:selfextensional}
In what follows, we align to the literature in abstract algebraic logic \cite{font2003survey}, and  understand a {\em logic} to be a tuple 
$\mathcal{L}= (\mathrm{Fm}, \vdash)$, such that $\mathrm{Fm}$ is the term algebra (in a given algebraic signature) over a set $\mathsf{Prop}$ of atomic propositions, and $\vdash$ is a {\em consequence relation} on $\mathrm{Fm}$, i.e.~$\vdash$ is a relation between sets of formulas and formulas  such that, for all $\Gamma, \Delta\subseteq \mathrm{Fm}$ and all $\varphi\in \mathrm{Fm}$, (a) if $\varphi\in \Gamma$ then $\Gamma\vdash \varphi$; (b) if  $\Gamma\vdash \varphi$ and $\Gamma \subseteq \Delta$, then $\Delta\vdash \varphi$; (c) if $\Delta\vdash \varphi$ and $\Gamma\vdash \psi$ for every $\psi\in \Delta$, then $\Gamma\vdash \varphi$. 
Clearly,  any such $\vdash$ induces a preorder on $\mathrm{Fm}$, which we still denote $\vdash$, by restricting to singletons.    A logic $\mathcal{L}$ is {\em selfextensional} (cf.~\cite{jansana2006referential}) if the relation ${\equiv} \subseteq\, \mathrm{Fm}\times \mathrm{Fm}$, defined by $\varphi\equiv \psi$ iff $\varphi\vdash \psi$ and $\psi\vdash \varphi$, is a congruence of $\mathrm{Fm}$. In this case, the {\em Lindenbaum-Tarski algebra} of $\mathcal{L}$ is the partially ordered algebra $Fm = (\mathrm{Fm}/{\equiv}, \vdash)$ where, abusing notation, $\vdash$ also denotes the partial order on $\mathrm{Fm}/{\equiv}$, defined as $[\varphi]_{\equiv}\vdash [\psi]_{\equiv}$ iff $\varphi\vdash \psi$.
In what follows, we will also assume that each element in the class $\mathsf{Alg}(\mathcal{L})$ of algebras canonically associated with $\mathcal{L}$ is partially ordered, and  that, if $\varphi$ and $\psi$ are formulas, then $\varphi\vdash \psi$ iff $h(\varphi)\leq h(\psi)$ for every $A\in \mathsf{Alg}(\mathcal{L})$ and every homomorphism $h: \mathrm{Fm}\to A$.

For any $\Gamma\subseteq \mathrm{Fm}$, let $Cn(\Gamma): = \{ \psi \mid \Gamma \vdash \psi \}$.\footnote{In what follows, we write e.g.~$Cn(\varphi)$ for $Cn(\{\varphi\})$.} 
The {\em conjunction property} holds for $\mathcal{L}$ if a term $t(x, y): = x\wedge y$ exists such that $Cn(\varphi\wedge \psi) = Cn(\{\varphi, \psi\})$ for all $\varphi, \psi\in \mathrm{Fm}$. The {\em disjunction property} holds for $\mathcal{L}$ if a term $t(x, y): = x\vee y$ exists such that $Cn(\varphi\vee \psi) = Cn(\varphi)\cap Cn( \psi)$ for all $\varphi, \psi\in \mathrm{Fm}$.

Although the original framework of input/output logic takes $\mathcal{L}$ to be classical propositional logic, in the next subsection we  present it in the more general framework of selfextensional logics just described.  
\subsection{Input/output logic}
The general theory of input/output logic aims at modelling relations generalizing inference, where inputs need not be included among outputs, and outputs need not to be reusable as inputs \cite{Makinson00}.  
\begin{definition}
Let $\mathcal{L}= (\mathrm{Fm}, \vdash)$ be a logic in the sense specified above. 
A  {\em normative system} is a relation $N \subseteq \mathrm{Fm}\times \mathrm{Fm}$, the elements  $(\alpha,\varphi)$ of which are called {\em conditional norms} (or obligations). An {\em input/output logic} is a tuple $\mathbb{L} = (\mathcal{L}, N)$ s.t.~$\mathcal{L}= (\mathrm{Fm}, \vdash)$ is a (selfextensional) logic, and $N$ is a normative system on $\mathrm{Fm}$.
\end{definition}
The reading of each  norm $(\alpha,\varphi)\in N$ is ``given $\alpha$, it is obligatory that $\varphi$''. The formula $\alpha$ is  the {\em body} of the norm, and represents some situation or condition, while $\varphi$ is  the {\em head} and represents what is obligatory or desirable in that situation.
For any  $\Gamma\subseteq \mathrm{Fm}$, let $N(\Gamma) := \{ \psi \mid \exists \alpha(\alpha \in \Gamma \ \&\ (\alpha,\psi)\in N )\}$. 

\begin{definition}[Output operations] For any input/output logic $\mathbb{L} = (\mathcal{L}, N)$, and each $1\leq i\leq 4$,
the output operation $out_i^N$ is defined as follows: for any $\Gamma\subseteq \mathrm{Fm}$,
\[out_i^N(\Gamma): = N_i(\Gamma) = \{ \psi\in \mathrm{Fm} \mid \exists \alpha(\alpha \in \Gamma \ \&\ (\alpha,\psi)\in N_i )\} \] 
 where $N_i\subseteq \mathrm{Fm}\times \mathrm{Fm}$ is the  {\em closure}  of $N$ under (i.e.~the smallest extension of $N$ satisfying)  the  inference rules below, as  specified in the table.

\vspace{1mm}
\begin{center}
\begin{tabular}{lll}
\infer[(\top)]{(\top,\top)}{} &
\infer[\mathrm{(SI)}]{(\beta,\varphi)}{(\alpha,\varphi) &  \beta \vdash \alpha} &
\infer[\mathrm{(WO)}]{(\alpha,\psi)}{(\alpha,\varphi) &  \varphi \vdash \psi} \\[2mm]
\infer[\mathrm{(AND)}]{(\alpha,\varphi \land \psi)}{(\alpha,\varphi) & (\alpha,\psi)} &
\infer[\mathrm{(OR)}]{(\alpha \lor \beta, \varphi)}{(\alpha,\varphi) & (\beta,\varphi)} &
\infer[\mathrm{(CT)}]{(\alpha,\psi)}{(\alpha,\varphi) & (\alpha \land \varphi, \psi)}
\end{tabular}
\end{center}

\begin{center}
	\begin{tabular}{ l l}
		\hline
		$N_i$ & Rules   \\
		\hline	
		$N_{1}$  & ($\top$), (SI), (WO), (AND)   \\
		$N_{2}$  & ($\top$), (SI), (WO), (AND), (OR)  \\
		$N_{3}$  & ($\top$), (SI), (WO), (AND), (CT)  \\
		$N_{4}$  & ($\top$), (SI), (WO),(AND), (OR), (CT) \\
		\hline
	\end{tabular}
\end{center}      
\end{definition}
\subsection{(Proto-)subordination algebras}
\begin{definition}[(Proto-)subordination algebra]
\label{def: subordination algebra} A {\em proto-subordination algebra} is a tuple $\mathbb{S} = (A, \prec)$ such that $A$ is a (possibly bounded) poset (with bottom denoted $\bot$ and top denoted $\top$ when they exist), and $\prec\ \subseteq A\times A$. A  proto-subordination algebra  is named as indicated in the left-hand column in the table below when $\prec$ satisfies the properties indicated in the right-hand column. In what follows, we will refer to a proto-subordination algebra  $\mathbb{S} = (A, \prec)$ as e.g.~{\em (distributive) lattice-based ((D)L-based)}, or {\em Boolean-based (B-based)} if $A$ is a (distributive) lattice, a Boolean algebra, and so on. More in general, for any logic $\mathcal{L}$, we say that $\mathbb{S} = (A, \prec)$ is $\mathsf{Alg}(\mathcal{L})$-{\em based} if $A\in \mathsf{Alg}(\mathcal{L})$. The reader can safely assume that $A$ is a (bounded distributive) lattice, or a Boolean algebra, although, if this is not specified, the results presented below will hold more generally. We will flag out the assumptions we need in the statements of propositions.
\begin{center}
\begin{longtable}{rlrl} 
     ($\bot$) & $\bot \prec \bot$  &
     ($\top$) & $\top \prec \top$ \\
     (SI) & $a \leq b \prec x \Rightarrow a \prec x$ &
     (WO) & $b \prec x \leq y \Rightarrow b \prec y$ \\
     (AND) & $a \prec x \ \& \ a \prec y \Rightarrow a \prec x \wedge y$ &
     (OR) & $a \prec x \ \& \ b \prec x \Rightarrow a \vee b \prec x$ \\
     (D) & $a \prec c \Rightarrow \exists b(a \prec b\ \&\ b \prec c)$ &
     (S6) & $a \prec b \Rightarrow \neg b \prec \neg a$ \\
     (CT) & $a \prec b \ \& \ a \wedge b \prec c \Rightarrow a \prec c$ &
     (T) & $a \prec b \ \& \ b \prec c \Rightarrow a \prec c$ \\
     (DD) & \multicolumn{3}{l}{ $a \prec x_1\ \&\ a \prec x_2 \Rightarrow \exists x (a \prec x\ \&\ x \leq x_1\ \&\ x\leq x_2)$} \\
	 (UD) & \multicolumn{3}{l}{ $a_1 \prec x\ \&\ a_2 \prec x \Rightarrow \exists a (a \prec x\ \&\ a_1 \leq a\ \&\ a_2 \leq a)$} \\
     (S9)& \multicolumn{3}{l}{ $\exists c( c \prec b\ \&\ x\prec a \vee c) \iff \exists a^\prime \exists b^\prime (a^\prime \prec a\ \&\ b^\prime \prec b\ \&\ x \leq a^\prime \vee b^\prime)$
     } \\
     (SL1) & \multicolumn{3}{l}{ $a\prec b \vee c \Rightarrow \exists b^\prime \exists c^\prime (b^\prime \prec b\ \&\ c^\prime \prec c\ \&\ a \prec b^\prime \vee c^\prime)$
     } \\
     (SL2) & \multicolumn{3}{l}{ $b \wedge c \prec a \Rightarrow \exists b^\prime \exists c^\prime (b^\prime \prec b\ \&\ c^\prime \prec c\ \&\ b^\prime \wedge  c^\prime \prec a)$
     }
\end{longtable}
\end{center}
\end{definition}

\begin{center}
	\begin{tabular}{ l l}
		\hline
		Name & Properties  \\
		\hline
		$\Diamond$-premonotone & (SI)   \\
			$\blacksquare$-premonotone & (WO)   \\
			premonotone & (SI) (WO)   \\
			$\Diamond$-directed & (WO) (DD)   \\
			$\blacksquare$-directed & (SI) (UD)   \\
			
			$\Diamond$-monotone & (WO) (DD) (SI)  \\
			$\blacksquare$-monotone & (SI) (UD) (WO)   \\
			directed/monotone & (SI) (WO) (UD) (DD) \\
			$\Diamond$-regular & (SI) (WO) (DD) (OR) \\
			$\blacksquare$-regular & (SI) (WO) (UD) (AND) \\
			regular & (SI) (WO) (OR) (AND)\\
			$\Diamond$-normal & (SI) (WO) (DD) (OR) ($\bot$)  \\
			$\blacksquare$-normal & (SI) (WO) (UD) (AND) ($\top$) \\
			subordination algebra & (SI) (WO) (OR) (AND) ($\bot$) ($\top$)\\
		\hline
	\end{tabular}
\end{center} 

Normative systems can be interpreted in proto-subordination algebras as follows:

\begin{definition}
 A {\em model} for an input/output logic $\mathbb{L} =(\mathcal{L}, N)$ is a tuple $\mathbb{M} = (\mathbb{S}, h)$ s.t.~$\mathbb{S} = (A, \prec)$ is an $\mathsf{Alg}(\mathcal{L})$-based proto-subordination algebra (i.e.~$A\in \mathsf{Alg}(\mathcal{L})$), and $h:\mathrm{Fm}\to A$ is a homomorphism s.t.~for all $\varphi, \psi\in \mathrm{Fm}$, if $(\varphi, \psi)\in N$, then $h(\varphi)\prec h(\psi)$.


\end{definition}

\subsection{Canonical extensions and slanted algebras}
In the present subsection, we adapt material from  \cite[Sections 2.2 and 3.1]{de2020subordination},\cite[Section 2]{DuGePa05}. For any poset $A$, a subset $B\subseteq A$ is {\em upward closed}, or an {\em up-set} (resp.~{\em downward closed}, or a {\em down-set}) if $\lfloor B\rfloor: = \{c\in A\mid \exists b(b\in B\ \&\ b\leq c)\}\subseteq B$ (resp.~$\lceil B\rceil: = \{c\in A\mid \exists b(b\in B\ \&\ c\leq b)\}\subseteq B$); a subset $B\subseteq A$  is {\em down-directed} (resp.~{\em up-directed}) if, for all  $a, b\in B$,  some $x\in B$ exists s.t.~$x\leq a$ and $x\leq b$ (resp.~$a\leq x$ and $b\leq x$). 
It is straightforward to verify that when $A$ is a lattice, down-directed upsets and up-directed down-sets coincide with  lattice filters and ideals, respectively.
\begin{definition}
Let $A$ be a  subposet of a complete lattice $A'$. 
\begin{enumerate}
\item An element $k\in A'$ is {\em closed} if $k = \bigwedge F$ for some down-directed  $F\subseteq A$; an element $o\in A'$ is {\em open} if $o = \bigvee I$ for some up-directed  $I\subseteq A$;  
\item  $A$ is {\em dense} in $A'$ if every element of $A'$ can be expressed both as the join of closed elements and as the meet of open elements of $A$.
\item $A$ is {\em compact} in $A'$ if, for all $F, I\subseteq A$ s.t.~$F$ is  down-directed, $I$ is  up-directed, if $\bigwedge F\leq \bigvee I$ then $a\leq b$ for some $a\in  F$ and $b\in I$.\footnote{When the poset $A$ is a lattice, the compactness can be equivalently reformulated by dropping the requirements that $F$ be down-directed and $I$ be up-directed.} 
\item The {\em canonical extension} of a poset $A$ is a complete lattice $A^\delta$ containing $A$
as a dense and compact subposet.
\end{enumerate}
\end{definition}
The canonical extension $A^\delta$ of any poset $A$ always exists and is  unique up to an isomorphism fixing $A$ (cf.\ \cite[Propositions 2.6 and 2.7]{DuGePa05}).
%
 The set of the closed (resp.~open) elements    of $A^\delta$ is denoted $K(A^\delta)$ (resp.~$O(A^\delta)$). The following proposition collects well known facts which we will use in the remainder of the paper.
 
 \begin{proposition}
 \label{prop:background can-ext}
 For every poset $A$,
 \begin{enumerate}[label=(\roman*)]
     \item if $A$ is a distributive lattice (DL), then $A^\delta$ is completely distributive.
     \item if $\neg: A\to A$ is  antitone and s.t.~$(A, \neg)\models \forall a\forall b(\neg a\leq  b \Leftrightarrow \neg b\leq  a)$, then $\neg^\sigma: A^\delta\to A^\delta$ defined as $\neg^\sigma o: =\bigwedge \{\neg a\mid a\leq o\}$ for any $o\in O(A^\delta)$ and $\neg^\sigma u: =\bigvee \{\neg^\sigma o\mid u\leq o\}$ for any $u\in A^\delta$ is antitone and s.t.~$(A^\delta, \neg^\sigma) \models\forall u\forall v(\neg u\leq v \Leftrightarrow \neg v\leq u)$. If in addition, $(A, \neg)\models a\leq \neg\neg a$, then $(A^\delta,\neg^\sigma)\models u\leq \neg\neg u$. Hence, if $(A, \neg)\models a = \neg\neg a$ (i.e.~$\neg$ is involutive), then  $(A^\delta,\neg^\sigma)\models a = \neg\neg a$.
     \item if $\neg: A\to A$ is  antitone and s.t.~$(A, \neg)\models \forall a\forall b( a\leq \neg b \Leftrightarrow  b\leq \neg a)$, then $\neg^\pi: A^\delta\to A^\delta$ defined as $\neg^\pi k: =\bigvee \{\neg a\mid k\leq a\}$ for any $k\in K(A^\delta)$ and $\neg^\pi u: =\bigwedge \{\neg^\pi k\mid k\leq u\}$ for any $u\in A^\delta$ is antitone and s.t.~$(A^\delta, \neg^\pi)\models\forall u\forall v( u\leq \neg v \Leftrightarrow  v\leq\neg u)$. If in addition, $(A, \neg)\models  \neg\neg a\leq a$, then $(A^\delta,\neg^\pi)\models \neg\neg u\leq  u$. Hence, if $\neg$ is involutive, then so is $\neg^\pi$.
 \end{enumerate}
 \end{proposition}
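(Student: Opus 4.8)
The three items are well-known facts about canonical extensions; here is the route I would take for each. \emph{Part (i).} I would prove the slightly stronger statement that, for a distributive lattice $A$, the lattice $A^\delta$ is \emph{perfect}, i.e.\ completely distributive with the completely join-prime elements join-dense. By the standard characterisation of complete distributivity in terms of join-density of completely join-prime elements (Raney), it then suffices to establish that join-density; equivalently, one gets an isomorphism $A^\delta\cong \mathsf{Up}(P)$ with $P$ the induced poset on those elements, and complete distributivity of an up-set lattice is just the set-theoretic identity $\bigcap_i\bigcup_j S_{ij}=\bigcup_f\bigcap_i S_{i,f(i)}$, valid in any powerset. Thus the work concentrates in one lemma: every element of $A^\delta$ is a join of completely join-prime elements. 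By density it suffices to treat a closed element $k=\bigwedge F$ with $F$ a filter of $A$: if $a\not\leq k$, then compactness gives $a\notin F$, so by the Prime Filter Theorem for distributive lattices there is a prime filter $P\supseteq F$ with $a\notin P$; the closed element $\bigwedge P$ is completely join-prime (here distributivity enters, through the primeness of $P$, together with compactness), lies above $k$, and fails to lie above $a$. Hence $k$ equals the join of the completely join-prime elements $\bigwedge P$ with $F\subseteq P$ prime.

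\emph{Part (ii).} First I would check that the two clauses defining $\neg^\sigma$ agree on $O(A^\delta)$: restricted to opens, $o\mapsto\bigwedge\{\neg a\mid a\leq o\}$ is antitone (a larger $o$ enlarges the index set), so $\bigvee\{\neg^\sigma o'\mid o\leq o'\in O(A^\delta)\}=\neg^\sigma o$; monotonicity of $\neg^\sigma$ on all of $A^\delta$ is then formal, since $u\leq v$ shrinks $\{o\mid v\leq o\}\subseteq\{o\mid u\leq o\}$. Two auxiliary facts, both from compactness: (1) for open $o$, the set $\{a\in A\mid a\leq o\}$ is up-directed (if $a_1,a_2\leq o=\bigvee I$, then $a_i\leq b_i\in I$ and up-directedness of $I$ gives $b_3\in I$ above both, with $b_3\leq o$), hence $\{\neg a\mid a\leq o\}$ is down-directed and $\neg^\sigma o$ is \emph{closed}; (2) the biconditional holds between opens, $\neg^\sigma o\leq p\iff\neg^\sigma p\leq o$: if $\neg^\sigma o\leq p=\bigvee I_p$, then since $\neg^\sigma o$ is closed, compactness yields $a\leq o$ and $b\in I_p$ with $\neg a\leq b$; the hypothesis on $(A,\neg)$ turns $\neg a\leq b$ into $\neg b\leq a\leq o$, and $b\leq p$ gives $\neg^\sigma p\leq\neg b\leq o$. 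The biconditional lifts to all of $A^\delta$ by density: from $\neg^\sigma u\leq v$ and any open $p\geq v$, every open $o\geq u$ satisfies $\neg^\sigma o\leq p$, hence $\neg^\sigma p\leq o$; as $o$ ranges over the opens above $u$, density gives $\neg^\sigma p\leq\bigwedge\{o\mid u\leq o\}=u$, and joining over such $p$ yields $\neg^\sigma v\leq u$. This is exactly the displayed biconditional for $(A^\delta,\neg^\sigma)$, which is symmetric in $u,v$, so that part is done.

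For the involution clauses, the biconditional alone already forces $\neg^\sigma\neg^\sigma u\leq u$ for every $u$ (put $v=\neg^\sigma u$). For the converse under $(A,\neg)\models a\leq\neg\neg a$, I would use that each $a\in A$ is clopen in $A^\delta$, whence $\neg^\sigma a=\neg a$ and $\neg^\sigma\neg^\sigma a=\neg\neg a\geq a$; then, for closed $k=\bigwedge F$, a compactness computation gives $\neg^\sigma k=\bigvee\{\neg a\mid a\in F\}$ (so $\neg^\sigma k$ is open) and $\neg^\sigma\neg^\sigma k=\bigwedge\{\neg c\mid c\in A,\ c\leq\neg^\sigma k\}$. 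Each such $c$ satisfies $c\leq\neg a$ for some $a\in F$ (compactness again), so $a\leq\neg\neg a\leq\neg c$ by the hypothesis and antitonicity; hence $\neg c\geq\bigwedge F=k$ for every $c$, i.e.\ $k\leq\neg^\sigma\neg^\sigma k$. Density and monotonicity of $\neg^\sigma\neg^\sigma$ then give $u\leq\neg^\sigma\neg^\sigma u$ for all $u$, and combining the two inequalities gives involutivity.

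\emph{Part (iii)} is order-dual to (ii): the condition $a\leq\neg b\iff b\leq\neg a$ makes $\neg$ self-adjoint with the roles of closed/open and meet/join interchanged, so I would run the same three steps — agreement of the two clauses on $K(A^\delta)$, the biconditional first between closed elements and then on all of $A^\delta$ (via density, now using that every element is a join of closeds), and the involution clauses, where the biconditional now forces $u\leq\neg^\pi\neg^\pi u$ for free and the extra hypothesis $\neg\neg a\leq a$ supplies $\neg^\pi\neg^\pi u\leq u$. \emph{The main obstacle} throughout is the family of compactness manoeuvres that transfer the hypotheses on $(A,\neg)$ from $A$ to the closed and open elements of $A^\delta$ — above all the fact that $\neg^\sigma$ (resp.\ $\neg^\pi$) maps opens to closeds (resp.\ closeds to opens), which is what makes the reduction to elements of $A$ possible; everything else is bookkeeping with density and antitonicity. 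Note that the directedness facts used above rely only on compactness, not on $A$ being a lattice, so (ii) and (iii) go through for arbitrary posets $A$, the lattice hypothesis entering only in (i).
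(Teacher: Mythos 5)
For the parts of this proposition that the paper actually proves --- the lifting of $a\leq\neg\neg a$ in items (ii) and (iii) --- your argument is correct and has essentially the same kernel as the paper's: reduce to closed elements, observe that $\neg^\sigma k$ is open (you obtain $\neg^\sigma k=\bigvee\{\neg a\mid a\in F\}$ by a direct compactness computation, the paper obtains it from $\neg^\sigma$ being a contravariant left adjoint, hence completely meet-reversing), and then use compactness together with $a\leq\neg\neg a$ and antitonicity on elements of $A$. You differ only in the surrounding reduction: the paper passes from $\forall u(u\leq\neg\neg u)$ to $\forall k(k\leq\neg\neg k)$ via a denseness/Ackermann-style chain of equivalences, while you use density plus monotonicity of the composite $\neg\neg$; both are sound, and yours is arguably more elementary. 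The other difference is one of scope: the paper outsources item (i) to the literature and the construction and adjunction part of (ii)--(iii) to \cite[Proposition 3.6]{DuGePa05}, whereas you reprove them; your proof of the adjunction (agreement of the two clauses on opens, $\neg^\sigma$ sending opens to closeds by directedness, the biconditional first between opens and then lifted to $A^\delta$ by density) is correct, and your closing observation that none of this needs $A$ to be a lattice matches the proposition's stated generality. (A wording slip: where you say ``monotonicity of $\neg^\sigma$'' you mean, and in fact prove, antitonicity.)

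The one genuine soft spot is your sketch of item (i), which the paper merely cites. First, a direction error: if $F\subseteq P$ then $\bigwedge P\leq\bigwedge F=k$, so the join-prime elements you construct lie \emph{below} $k$, not above it, and the relevant hypothesis is $k\not\leq a$ (which gives $a\notin F$), not $a\not\leq k$. Second, and more substantively, separating $k$ from single elements $a\in A$ does not yield the stated conclusion $k=\bigvee\{\bigwedge P\mid F\subseteq P\ \mathrm{prime}\}$: what your separation gives is that every $a\in A$ lying above all the elements $\bigwedge P$ also lies above $k$; but the join $v$ of the $\bigwedge P$'s need not be closed, so $v$ is in general strictly smaller than the meet of the elements of $A$ above it, and $k\leq v$ does not follow. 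The standard repair is to separate $k$ from \emph{open} elements: if $k\not\leq o=\bigvee I$ with $I$ an ideal, compactness gives $F\cap I=\varnothing$, the filter--ideal form of the prime filter theorem gives a prime filter $P\supseteq F$ with $P\cap I=\varnothing$, and compactness again gives $\bigwedge P\not\leq o$; combined with denseness (every element is a join of closeds and a meet of opens) this yields join-density of the completely join-prime elements, after which your up-set-lattice argument for complete distributivity goes through. The remaining ingredient of your sketch --- that $\bigwedge P$ is completely join-prime, via primeness of $P$ and compactness --- is fine.
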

\begin{proof}
    (i) see \cite[Theorem 2.5]{Gehrke2004BoundedDL}. (ii) For the first part of the statement, see \cite[Proposition 3.6]{DuGePa05}. Let us assume that $(A, \neg)\models a\leq \neg\neg a$, and show that $(A^\delta,\neg^\sigma)\models u\leq \neg\neg u$. The following chain of equivalences holds in $(A^\delta,\neg^\sigma)$, where  $k$ ranges in $K(A^\delta)$ and $o$ in $O(A^\delta)$:
    \begin{center}
        \begin{tabular}{c ll}
             & $\forall u( u\leq \neg\neg u)$ \\
           iff   & $\forall u\forall k\forall o((k\leq u\ \&\ \neg\neg u\leq o) \Rightarrow k\leq o)$ & denseness  \\ iff   & $\forall k\forall o(\exists u(k\leq u\ \&\ \neg\neg u\leq o) \Rightarrow k\leq o)$ &   \\
       iff   & $\forall k\forall o( \neg\neg k\leq o \Rightarrow k\leq o)$ & Ackermann's lemma \\    iff   & $\forall k(  k\leq \neg\neg k)$. & denseness \\
        \end{tabular}
    \end{center}
    Hence, to complete the proof, it is enough to show that, if $k\in K(A^\delta)$, then $k\leq \neg\neg k$. By definition, $k = \bigwedge D$ for some down-directed $D\subseteq A$. Since $\neg^\sigma$ is a (contravariant) left adjoint, $\neg^\sigma$ is completely meet-reversing. Hence, $\neg k = \neg(\bigwedge D) = \bigvee \{\neg d\mid d\in D\}$, and since $D$ being down-directed implies that $\{\neg d\mid d\in D\}\subseteq A$ is up-directed, we deduce that $\neg k\in O(A^\delta)$. Hence,
    \begin{center}
        \begin{tabular}{rcll}
              $\neg\neg k$
             & $=$&$ \bigwedge \{\neg a\mid a\leq \neg k\}$ &$\neg k\in O(A^\delta)$ \\
             & $=$&$  \bigwedge \{\neg a\mid a\leq \bigvee \{\neg d\mid d\in D\}\}$\\ 
             & $=$&$  \bigwedge \{\neg a\mid \exists d(d\in D\ \&\ a\leq \neg d )\}$. & compactness\\ \end{tabular}
    \end{center}
    Hence, to show that $\bigwedge \lfloor D\rfloor = \bigwedge D = k\leq \neg\neg k$, it is enough to show that if $a\in A$ is s.t.~$\exists d(d\in D\ \&\ a\leq \neg d )$, then $d'\leq \neg a$ for some $d'\in D$. From $a\leq \neg d$, by the antitonicity of $\neg$, it follows $\neg\neg d\leq \neg a$; combining this inequality with $d\leq \neg\neg d$ which holds by assumption for all $d\in A$, we get  $d': = d\leq \neg a$, as required. Finally, notice that by instantiating the left-hand inequality in the equivalence $(A^\delta, \neg^\sigma) \models\forall u\forall v(\neg u\leq v \Leftrightarrow \neg v\leq u)$ with $v: = \neg u$, one immediately gets $(A^\delta, \neg^\sigma) \models\forall u(\neg \neg u\leq u)$.  (iii) dual to (ii).
\end{proof}

\begin{definition}
A {\em slanted algebra} is a triple $\mathbb{A} = (A, \Diamond, \blacksquare)$ such that $A$ is a poset, and $\Diamond, \blacksquare: A\to A^\delta$   s.t.~$\Diamond a \in K(A^\delta)$  and $\blacksquare a \in O(A^\delta)$ for every $a$. A slanted algebra as above is {\em tense} if $\Diamond a\leq b$ iff $a\leq \blacksquare b$ for all $a, b\in A$; 
is {\em monotone} if $\Diamond$ and $\blacksquare$ are monotone; is {\em regular} if $\Diamond$ and $\blacksquare$ are regular (i.e.~$\Diamond(a\vee b) = \Diamond a \vee \Diamond b$ and $\blacksquare(a\wedge b) = \blacksquare a \wedge \blacksquare b$ for all $a, b\in A$); is {\em normal} if $\Diamond$ and $\blacksquare$ are normal (i.e.~they are regular and $\Diamond\bot = \bot$  and $\blacksquare\top = \top$).

\end{definition}
The following definition is framed in the context of monotone slanted algebras, but can be given for arbitrary slanted algebras, albeit at the price of complicating the definition of $\Diamond^\sigma$ and $ \blacksquare^\pi$. Because we are mostly going to apply it in the monotone setting, we present the simplified version here.
\begin{definition}
\label{def: sigma and pi extensions of slanted}
For any monotone slanted algebra $\mathbb{A} = (A, \Diamond, \blacksquare)$ the {\em canonical extension} of $\mathbb{A}$ is the (standard!) modal algebra $\mathbb{A}^\delta: = (A^\delta, \Diamond^\sigma, \blacksquare^\pi)$ such that $\Diamond^\sigma, \blacksquare^\pi: A^\delta\to A^\delta$ are defined as follows:  for every $k\in K(A^\delta)$, $o\in O(A^\delta)$ and $u\in A^\delta$,
\[\Diamond^\sigma k:= \bigwedge\{ \Diamond a\mid a\in A\mbox{ and } k\leq a\}\quad \Diamond^\sigma u:= \bigvee\{ \Diamond^\sigma k\mid k\in K(A^\delta)\mbox{ and } k\leq u\}\] 
\[\blacksquare^\pi o:= \bigvee\{ \blacksquare a\mid a\in A\mbox{ and } a\leq o\},\quad \blacksquare^\pi u:= \bigwedge\{ \blacksquare^\pi o\mid o\in O(A^\delta)\mbox{ and } u\leq o\}.\]
\end{definition}

 For any slanted algebra $\bba$, any  assignment $v:\mathsf{PROP}\to \bba$ uniquely extends to a homomorphism $v: \mathcal{L}\to \bbas$ (abusing notation, the same symbol denotes   both the assignment and its homomorphic extension).  Hence,
\begin{definition} 
\label{def:slanted satisfaction and validity}
 A modal inequality $\phi\leq\psi$ is {\em satisfied} in a slanted algebra $\bba$ under the assignment $v$ (notation: $(\bba, v)\models \phi\leq\psi$) if $(\bbas, e\cdot v)\models \phi\leq\psi$ in the usual sense, where $e\cdot v$ is the assignment on $\bbas$ obtained by composing the canonical embedding $e: \bba\to \bbas$ to the assignment $v:\mathsf{Prop}\to \bba$. 
 
 Moreover, $\phi\leq\psi$ is {\em valid} in $\bba$ (notation: $\bba\models \phi\leq\psi$) if $(\bbas, e\cdot v)\models \phi\leq\psi$ for every assignment $v$ into $\bba$  (notation: $\bbas\models_{\bba} \phi\leq\psi$). 
\end{definition}

\section{Proto-subordination algebras and slanted algebras}
\label{sec: proto and slanted}
\noindent Let $\mathbb{S} = (A, \prec)$  be a proto-subordination algebra s.t.~$\mathbb{S}\models \mathrm{(DD)+(UD)}$. The slanted algebra associated with  $\mathbb{S}$ is  $\mathbb{S}^* = (A, \Diamond, \blacksquare)$ s.t.~$\Diamond a \coloneqq \bigwedge {\prec}[a]$ and $\blacksquare a \coloneqq \bigvee {\prec^{-1}}[a]$ for any $a$. From $\mathbb{S}\models \mathrm{(DD)}$ it follows that ${\prec}[a]$ is down-directed for every $a\in A$, hence $\Diamond a\in K(A^\delta)$. Likewise, $\mathbb{S}\models \mathrm{(UD)}$ guarantees that $\blacksquare a\in O(A^\delta)$ for all $a\in A$.

\begin{lemma} \label{lem: diamond-output equivalence}
For any proto-subordination algebra $\mathbb{S} = (A, \prec)$ and all $a, b\in A$,
\begin{enumerate}[label=(\roman*)]
    \item $a \prec b$ implies $\Diamond a \leq b$ and $a \leq \blacksquare b$.
    \item if $ \mathbb{S}\models \mathrm{(WO) +  (DD)}$, then $\Diamond a\leq b$ iff $a\prec b$.
    \item if $\mathbb{S}\models \mathrm{ (SI) + (UD)}$, then $a\leq \blacksquare b$ iff $a\prec b$.
\end{enumerate}
\end{lemma}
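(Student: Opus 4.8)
The plan is to unwind the definitions of $\Diamond$ and $\blacksquare$ in terms of meets and joins in $A^\delta$, and then use the compactness and denseness properties of the canonical extension (together with the relevant closure conditions on $\prec$) to convert between the order-theoretic inequalities and the subordination relation.

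For part (i), I would argue directly: if $a\prec b$, then $b\in {\prec}[a]$, so $b$ appears in the meet defining $\Diamond a = \bigwedge {\prec}[a]$, whence $\Diamond a\leq b$; dually $b\in{\prec^{-1}}[b]$... more precisely $a\in {\prec^{-1}}[b]$, so $a$ appears in the join defining $\blacksquare b = \bigvee{\prec^{-1}}[b]$, whence $a\leq \blacksquare b$. This requires no closure conditions at all — it is immediate from the definitions — although strictly speaking for $\Diamond a$ and $\blacksquare b$ to be well-defined elements of $A^\delta$ (closed and open respectively) one wants (DD) and (UD), but the inequalities themselves hold regardless since $A^\delta$ is a complete lattice.

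For part (ii), the direction $a\prec b \Rightarrow \Diamond a\leq b$ is part (i). For the converse, assume $\Diamond a = \bigwedge{\prec}[a]\leq b$. Now $(\mathrm{DD})$ tells us ${\prec}[a]$ is down-directed, so it is a down-directed subset of $A$, and $b\in A$; since $A$ is compact in $A^\delta$ — applying compactness with $F = {\prec}[a]$ (down-directed) and $I = \{b\}$ (trivially up-directed) — from $\bigwedge {\prec}[a]\leq b = \bigvee\{b\}$ we get some $c\in{\prec}[a]$ with $c\leq b$. So $a\prec c$ and $c\leq b$, and $(\mathrm{WO})$ then yields $a\prec b$. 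Part (iii) is perfectly dual: given $a\leq\blacksquare b=\bigvee{\prec^{-1}}[b]$, use $(\mathrm{UD})$ to know ${\prec^{-1}}[b]$ is up-directed, apply compactness with $F=\{a\}$ and $I = {\prec^{-1}}[b]$ to extract $c\in{\prec^{-1}}[b]$ with $a\leq c$, so $c\prec b$, and then $(\mathrm{SI})$ gives $a\prec b$.

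The only subtlety — and the step I would be most careful about — is the exact invocation of compactness: the definition of compactness in the excerpt requires $F$ down-directed and $I$ up-directed, and one must check that a singleton counts as both (it does, trivially), and that ${\prec}[a]$ being down-directed is genuinely what $(\mathrm{DD})$ delivers (it says $a\prec x_1$ and $a\prec x_2$ imply $\exists x(a\prec x \ \&\ x\leq x_1\ \&\ x\leq x_2)$, which is precisely down-directedness of ${\prec}[a]$). There is no real obstacle here; the proof is a routine application of the machinery already set up, and the whole thing is essentially three short arguments, the latter two being order-duals of each other.
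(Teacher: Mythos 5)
Your proposal is correct and follows essentially the same route as the paper: part (i) directly from the definitions of $\Diamond a=\bigwedge{\prec}[a]$ and $\blacksquare b=\bigvee{\prec^{-1}}[b]$, and parts (ii)--(iii) by using (DD) (resp.\ (UD)) to make ${\prec}[a]$ down-directed (resp.\ ${\prec^{-1}}[b]$ up-directed), invoking compactness to extract a witness $c$, and closing with (WO) (resp.\ (SI)). The care you take about the singleton being trivially directed and about well-definedness of the slanted operations is consistent with the paper's treatment, so nothing further is needed.
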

\begin{proof}
    (i) $a\prec b$ iff $b\in {\prec}[a]$ iff $a \in {\prec}^{-1}[b]$, hence $a\prec b$ implies $b\geq \bigwedge {\prec}[a] = \Diamond a$ and $a \leq \bigvee{\prec}^{-1}[b] = \blacksquare b$.
    
    \noindent (ii) By (i), to complete the proof, we need to show the `only if' direction. The assumption $ \mathbb{S}\models \mathrm{(DD)}$ implies that  ${\prec}[a]$ is down-directed for any $a\in A$. Hence,  by compactness,  $\bigwedge {\prec}[a] = \Diamond a\leq b$ implies  that $c \leq b$ for some $c \in {\prec}[a]$, i.e.~$a \prec c \leq b$ for some $ c \in A$, and by (WO), this implies that $a\prec b$, as required.
    (iii) is proven similarly, by observing that $\mathbb{S}\models \mathrm{(UD)}$ implies that ${\prec}^{-1}[a]$ is up-directed for every $a\in A$.
\end{proof}

\begin{lemma}
For any lattice-based proto-subordination algebra $\mathbb{S} = (A, \prec)$,
\begin{enumerate}[label=(\roman*)]
    \item $\mathbb{S}\models\mathrm{(OR)}$ implies $\mathbb{S}\models\mathrm{(UD)}$.
    \item $\mathbb{S}\models\mathrm{(AND)}$ implies $\mathbb{S}\models\mathrm{(DD)}$.
    \item If $\mathbb{S}\models\mathrm{ (SI)}$, then $\mathbb{S}\models\mathrm{(UD)}$ iff $\mathbb{S}\models\mathrm{(OR)}$.
    \item if $\mathbb{S}\models\mathrm{ (WO)}$, then $\mathbb{S}\models\mathrm{ (DD)}$ iff $\mathbb{S}\models\mathrm{ (AND)}$.
\end{enumerate}
\end{lemma}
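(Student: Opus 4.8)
The plan is to prove the four implications/equivalences directly from the defining properties, working in the lattice $A$ and using the order-theoretic shape of (DD), (UD), (AND), (OR). Throughout, recall that in a lattice $A$, a subset is down-directed iff it is a filter-base and up-directed iff it is an ideal-base, and that $x_1 \wedge x_2 \leq x_i$ and $a_i \leq a_1 \vee a_2$ are available as canonical witnesses.

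For (i), assume (OR) and let $a_1 \prec x$ and $a_2 \prec x$; I want a single $a$ with $a \prec x$ and $a_1 \leq a$, $a_2 \leq a$. The obvious candidate is $a := a_1 \vee a_2$: then $a_i \leq a_1 \vee a_2 = a$ trivially, and $a_1 \vee a_2 \prec x$ is exactly the conclusion of (OR) applied to $a_1 \prec x$ and $a_2 \prec x$. So (UD) holds. For (ii), assume (AND) and let $a \prec x_1$ and $a \prec x_2$; take $x := x_1 \wedge x_2$, so $x \leq x_1$ and $x \leq x_2$ trivially, and $a \prec x_1 \wedge x_2$ is precisely the conclusion of (AND). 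So (DD) holds. These two items use nothing beyond the lattice structure and are essentially immediate.

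For (iii), one direction is (i), which needs no hypothesis; for the converse I assume (SI) and (UD) and derive (OR). Given $a \prec x$ and $b \prec x$, (UD) supplies some $c$ with $c \prec x$ and $a \leq c$, $b \leq c$. From $a \leq c$ and $b \leq c$ I get $a \vee b \leq c$, and then from $a \vee b \leq c \prec x$ together with (SI) (read as $a \leq b \prec x \Rightarrow a \prec x$, here with $a \vee b$ in the role of the smaller element and $c$ in the role of $b$) I conclude $a \vee b \prec x$. So (OR) holds. Item (iv) is the order-dual argument: one direction is (ii); for the converse, assume (WO) and (DD) and derive (AND). Given $a \prec x_1$ and $a \prec x_2$, (DD) supplies $x$ with $a \prec x$ and $x \leq x_1$, $x \leq x_2$, hence $x \leq x_1 \wedge x_2$; then $a \prec x \leq x_1 \wedge x_2$ plus (WO) gives $a \prec x_1 \wedge x_2$, which is (AND).

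There is no real obstacle here: the only thing to be careful about is matching the universally quantified variables in (SI)/(WO) to the right instances (using $a\vee b$ or $x_1 \wedge x_2$ as one of the two arguments), and noting explicitly where the lattice assumption is used — namely to form the joins/meets $a \vee b$ and $x_1 \wedge x_2$ and to know $a_i \leq a_1 \vee a_2$, $x_1 \wedge x_2 \leq x_i$. I would present (i) and (ii) in one or two lines each, then (iii) and (iv) as "one direction by (i) (resp.\ (ii)); conversely, \dots", keeping the two converse arguments explicitly dual to each other.
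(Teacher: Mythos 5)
Your proof is correct and follows essentially the same route as the paper: items (i) and (ii) are the immediate instantiations with $a_1\vee a_2$ and $x_1\wedge x_2$ (which the paper dismisses as straightforward), and your converse arguments for (iii) and (iv) — pass from the (UD)/(DD) witness $c$ (resp.\ $x$) to $a\vee b\leq c\prec x$ (resp.\ $a\prec x\leq x_1\wedge x_2$) and apply (SI) (resp.\ (WO)) — are exactly the paper's proof of (iii) and its dual.
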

\begin{proof}
    (i) and (ii) are straightforward. As for (iii), by (i), to complete the proof we need to show the `only if' direction. Let $a, b, x\in A$ s.t.~$a \prec x$ and $b \prec x$. By (UD), this implies that $c\prec x$ for some $c\in A$ such that $a\leq c$ and $b\leq c$. Since $A$ is a lattice, this implies that $a\vee b\leq c\prec x$, and by (SI), this implies that $a\vee b\prec x$, as required. 
   (vi) is proven similarly.
\end{proof}

\begin{lemma} \label{lem: rleation to axioms}
For every proto-subordination algebra $\mathbb{S} = (A, \prec)$,
\begin{enumerate}[label=(\roman*)]
   
    \item If  $\mathbb{S}\models\mathrm{ (SI)}$, then: 
    \begin{enumerate}
    \item $\Diamond$  on $\mathbb{S}^*$ is monotone;
    \item if $\mathbb{S}$ is DL-based, then   $\mathbb{S}\models\mathrm{ (AND)}$ implies $\mathbb{S}^\ast\models\blacksquare a \wedge \blacksquare b\leq \blacksquare (a \wedge b)$;
    \item if   
    $\mathbb{S}\models\mathrm{(UD)}$, then $\mathbb{S}\models\mathrm{ (AND)}$ implies $\mathbb{S}^\ast\models\blacksquare a \wedge \blacksquare b\leq \blacksquare (a \wedge b)$.
    \end{enumerate}
    \item If  $\mathbb{S}\models\mathrm{ (WO)}$, then \begin{enumerate}
    \item $\blacksquare$ on $\mathbb{S}^*$ is monotone;
    
    \item if 
    $\mathbb{S}$ is DL-based, then  $\mathbb{S}\models\mathrm{ (OR)}$ implies $\mathbb{S}^\ast\models\Diamond (a\vee b)\leq \Diamond a \vee \Diamond b$;

    \item if   
    $\mathbb{S}\models\mathrm{(DD)}$, then  $\mathbb{S}\models\mathrm{ (OR)}$ implies $\mathbb{S}^\ast\models\Diamond (a\vee b)\leq \Diamond a \vee \Diamond b$.
    \end{enumerate}
    
    \item If   $\mathbb{S}\models\mathrm{(\bot)}$, then $\mathbb{S}^\ast\models \Diamond\bot \leq \bot$.
    \item If   $\mathbb{S}\models\mathrm{(\top)}$, then  $\mathbb{S}^\ast\models\top\leq \blacksquare\top$.
\end{enumerate}
\end{lemma}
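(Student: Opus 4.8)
The plan is to prove each item by unwinding the definitions $\Diamond a = \bigwedge{\prec}[a]$ and $\blacksquare a = \bigvee{\prec^{-1}}[a]$ (all meets and joins taken in $A^\delta$) and invoking, as needed, density and compactness of $A$ in $A^\delta$ and --- for the two items carrying the distributive-lattice hypothesis --- complete distributivity of $A^\delta$ (Proposition~\ref{prop:background can-ext}(i)). The monotonicity statements (i)(a) and (ii)(a) are the base cases: if $a\leq b$ and $\mathbb{S}\models\mathrm{(SI)}$, then $b\prec x$ implies $a\prec x$, so ${\prec}[b]\subseteq{\prec}[a]$ and hence $\Diamond a = \bigwedge{\prec}[a]\leq\bigwedge{\prec}[b] = \Diamond b$; dually, if $a\leq b$ and $\mathbb{S}\models\mathrm{(WO)}$, then ${\prec^{-1}}[a]\subseteq{\prec^{-1}}[b]$ and $\blacksquare a\leq\blacksquare b$. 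Items (iii) and (iv) are immediate from the defining clauses: $(\bot)$ gives $\bot\in{\prec}[\bot]$, whence $\Diamond\bot = \bigwedge{\prec}[\bot]\leq\bot$ (using that the canonical extension preserves the bottom), and dually $(\top)$ gives $\top\leq\bigvee{\prec^{-1}}[\top] = \blacksquare\top$.

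For the DL-based items (i)(b) and (ii)(b), I would use complete distributivity of $A^\delta$ to rewrite $\blacksquare a\wedge\blacksquare b = \bigvee\{c\wedge d\mid c\prec a,\ d\prec b\}$ and, dually, $\Diamond a\vee\Diamond b = \bigwedge\{c\vee d\mid a\prec c,\ b\prec d\}$. For each such pair $c,d$, applying $\mathrm{(SI)}$ twice (since $c\wedge d\leq c$ and $c\wedge d\leq d$) and then $\mathrm{(AND)}$ gives $c\wedge d\prec a\wedge b$, so $c\wedge d\leq\blacksquare(a\wedge b)$; dually, applying $\mathrm{(WO)}$ twice and then $\mathrm{(OR)}$ gives $a\vee b\prec c\vee d$, so $\Diamond(a\vee b)\leq c\vee d$. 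Taking the join (resp.\ the meet) over all such pairs yields $\blacksquare a\wedge\blacksquare b\leq\blacksquare(a\wedge b)$ (resp.\ $\Diamond(a\vee b)\leq\Diamond a\vee\Diamond b$).

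For the general-lattice items (i)(c) and (ii)(c) complete distributivity is unavailable, so I would replace the argument above by one using density and compactness --- the same device already used in Lemma~\ref{lem: diamond-output equivalence}. To prove $\blacksquare a\wedge\blacksquare b\leq\blacksquare(a\wedge b)$ it suffices, by density, to show $k\leq\blacksquare(a\wedge b)$ for every closed $k$ with $k\leq\blacksquare a\wedge\blacksquare b$. Since $\mathbb{S}\models\mathrm{(UD)}$ makes ${\prec^{-1}}[a]$ up-directed (so that $\blacksquare a = \bigvee{\prec^{-1}}[a]$ is open), compactness applied to $k\leq\blacksquare a$ and to $k\leq\blacksquare b$ produces $e_1,e_2\in A$ with $k\leq e_1$, $k\leq e_2$, $e_1\prec a$ and $e_2\prec b$; then $e_1\wedge e_2\prec a\wedge b$ by $\mathrm{(SI)}$, $\mathrm{(SI)}$, $\mathrm{(AND)}$ exactly as before, so $k\leq e_1\wedge e_2\leq\blacksquare(a\wedge b)$. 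Item (ii)(c) is the order dual: one uses $\mathrm{(DD)}$ to make ${\prec}[a]$ down-directed, density by open elements, compactness, and $\mathrm{(WO)}$, $\mathrm{(WO)}$, $\mathrm{(OR)}$.

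The bookkeeping is routine; the only genuinely delicate points are in the (c) items, where one must replace the frame/co-frame law --- which fails without distributivity --- by the density-plus-compactness argument and, in order to apply compactness, must know that ${\prec^{-1}}[a]$ and ${\prec}[a]$ are directed sets. This is exactly the role played by $\mathrm{(UD)}$ and $\mathrm{(DD)}$, which are in any case already assumed, being what makes $\mathbb{S}^\ast$ a well-defined slanted algebra. Beyond this care, no new idea is needed past the canonical-extension facts recalled in Section~\ref{sec:prelim}.
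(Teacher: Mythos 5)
Your proposal is correct, and for items (i)(a), (ii)(a), (i)(b), (ii)(b), (iii) and (iv) it is essentially the paper's own proof: monotonicity from ${\prec}[b]\subseteq{\prec}[a]$ (resp.\ ${\prec^{-1}}[a]\subseteq{\prec^{-1}}[b]$), the endpoint items read off from $\bot\in{\prec}[\bot]$ and $\top\in{\prec^{-1}}[\top]$, and the DL-based items via complete distributivity of $A^\delta$ followed by (WO),(WO),(OR) (resp.\ (SI),(SI),(AND)) applied to the pairs of witnesses. The only place where you genuinely diverge is in the (c) items. There the paper does not redo a density-plus-compactness computation: it reduces $\Diamond(a\vee b)\leq\Diamond a\vee\Diamond b$ to showing, for $x\in A$ with $\Diamond a\vee\Diamond b\leq x$, that $\Diamond(a\vee b)\leq x$, and then runs the chain $\Diamond a\leq x$ and $\Diamond b\leq x$ iff $a\prec x$ and $b\prec x$ (Lemma \ref{lem: diamond-output equivalence}(ii), which uses (WO)+(DD)), hence $a\vee b\prec x$ by (OR) and $\Diamond(a\vee b)\leq x$ by Lemma \ref{lem: diamond-output equivalence}(i); item (i)(c) is dual. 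The trade-off is this: your argument forms the meet $e_1\wedge e_2$ (resp.\ the join $c_1\vee c_2$) of the compactness witnesses, so it tacitly assumes these meets/joins exist, i.e.\ a (semi)lattice base beyond the meets and joins appearing in the statement, whereas the paper's detour through Lemma \ref{lem: diamond-output equivalence} only ever uses $a\wedge b$ and $a\vee b$ themselves, which is why the (c) items carry no lattice-based flag. On the other hand, your version spells out exactly where denseness, compactness and the directedness supplied by (UD)/(DD) enter, and it tests against arbitrary open (resp.\ closed) elements rather than against elements of $A$ above $\Diamond a\vee\Diamond b$, thereby avoiding the paper's implicit appeal to the fact that this join of two closed elements is again the meet of the algebra elements above it. In the lattice-based setting both arguments are sound.
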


\begin{proof}
	(i)(a) Let $a, b\in A$ s.t.~$a \leq b$. To show that $\Diamond a = \bigwedge{\prec}[a]\leq \bigwedge{\prec}[b] = \Diamond b$, it is enough to show that ${\prec}[b]\subseteq {\prec}[a]$, i.e.~that if $x\in A$ and $b\prec x$, then $a\prec x$. Indeed, by (SI), $a\leq b\prec x$ implies $a\prec x$, as required.  (ii) (a) is shown similarly.

 	(ii)(b) Let $a, b\in A$. By definition,  $\Diamond (a \vee b) = \bigwedge {\prec} [a \vee b] = \bigwedge\{ d \mid a\vee b \prec d\}$, and, since $A^\delta$ is completely distributive when $A$ is a DL (cf.~Proposition \ref{prop:background can-ext}(i)),
 	\begin{center}
 	    \begin{tabular}{c ll l}
 	    $\Diamond a \vee \Diamond b $ & $=$ & $(\bigwedge {\prec} [a])\vee(\bigwedge {\prec} [b])$\\
 	    & $=$ & $ \bigwedge \{ c \vee c^\prime \mid a \prec c$ and $b \prec c^\prime \}$.\\
 	    \end{tabular}
 	\end{center}
  So, to show that $\Diamond (a\vee b)\leq \Diamond a \vee \Diamond b$, it is enough to show that $\{c \vee c^\prime \mid a \prec c$ and $b \prec c^\prime\}\subseteq \{ d \mid a\vee b \prec d\} $, i.e.~that for all $c, c'\in A$, if $a \prec c$ and $b\prec c'$, then $a \vee b \prec c  \vee c^\prime$. By  (WO),  $a \prec c \leq c\vee c'$ and $b\prec c^\prime\leq c\vee c'$ imply that $a \prec c \vee c^\prime $ and $b\prec c\vee c^\prime$, which by (OR) implies that $a \vee b \prec c  \vee c^\prime$, as required. (i)(b) is argued similarly.
 (ii)(c) To show that $\Diamond (a\vee b)\leq \Diamond a \vee \Diamond b$, it is enough to show that for any $x\in A$, if $\Diamond a \vee \Diamond b\leq x$, then $\Diamond (a\vee b)\leq x$.
    
\begin{center}
\begin{tabular}{rcll}
$\Diamond a \vee \Diamond b \leq x$ &
iff & $\Diamond a \leq x$ and $\Diamond b \leq x$  \\
&iff & $a \prec x$ and $b \prec x$ & Lemma \ref{lem: diamond-output equivalence} (ii) (WO) + (DD)\\
& implies & $a \vee b \prec x$ &  (OR)\\
& implies & $\Diamond (a \vee b) \leq x$ & Lemma \ref{lem: diamond-output equivalence} (i)\\
\end{tabular}
\end{center}
(i)(c) is proven similarly.   
  (iii) By assumption, $\bot \prec \bot $, i.e.~$\bot \in {\prec}[\bot]$, which implies $\Diamond \bot =\bigwedge {\prec} [\bot]\leq \bot$, as required. (iv) is argued similarly.
    \end{proof}

The following lemma gives a converse of Lemma \ref{lem: rleation to axioms} for $\Diamond$-directed or $\blacksquare$-directed proto-subordination algebras.

\begin{lemma}\label{lem: axioms to relation}
For any proto-subordination algebra $\mathbb{S} =(A, \prec)$, 
\begin{enumerate}[label=(\roman*)]
    \item If $\mathbb{S}\models\mathrm{(WO)} + \mathrm{(DD)}$,  then:
    \begin{enumerate}
        \item $\mathbb{S}\models\mathrm{(SI)}\quad$ iff $\quad \Diamond$ on $\mathbb{S}^\ast$ is monotone.
        \item $\mathbb{S}\models\mathrm{(OR)}\quad $ iff $\quad \mathbb{S}^\ast\models \Diamond (a \vee b) \leq \Diamond a \vee \Diamond b$.
        \item $\mathbb{S}\models(\bot)\quad $ iff $\quad\mathbb{S}^\ast\models\Diamond \bot \leq \bot$.
    \end{enumerate}
    \item If $\mathbb{S}\models\mathrm{(SI)} + \mathrm{(UD)}$, then:
    \begin{enumerate}
        \item $\mathbb{S}\models\mathrm{(WO)}\quad $ iff $\quad\blacksquare$ on $\mathbb{S}^\ast$ is monotone;
        \item $\mathbb{S}\models\mathrm{(AND)}\quad $ iff $\quad\mathbb{S}^\ast\models\blacksquare a \wedge \blacksquare b \leq \blacksquare (a \wedge b)$; 
        \item $\mathbb{S}\models(\top)\quad $ iff $\quad\mathbb{S}^\ast\models\top  \leq \blacksquare\top$.
    \end{enumerate}
\end{enumerate}
\end{lemma}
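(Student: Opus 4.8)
The plan is to split each of the six biconditionals into its two implications. The left-to-right implications are already available from the previous lemma: (i)(a) is Lemma~\ref{lem: rleation to axioms}(i)(a); (i)(b) follows from Lemma~\ref{lem: rleation to axioms}(ii)(c), since $\mathrm{(DD)}$ holds by the standing assumption of part (i); (i)(c) is Lemma~\ref{lem: rleation to axioms}(iii); and dually, (ii)(a), (ii)(b), (ii)(c) follow from Lemma~\ref{lem: rleation to axioms}(ii)(a), (i)(c) (using that $\mathrm{(UD)}$ is in force), and (iv). So the actual work lies in the converse directions, and the single tool driving all of them is the faithful encoding of $\prec$ by the slanted operators: under $\mathrm{(WO)+(DD)}$ one has $\Diamond a\leq b\iff a\prec b$ (Lemma~\ref{lem: diamond-output equivalence}(ii)), and under $\mathrm{(SI)+(UD)}$ one has $a\leq\blacksquare b\iff a\prec b$ (Lemma~\ref{lem: diamond-output equivalence}(iii)); moreover $a\prec b$ always yields $\Diamond a\leq b$ and $a\leq\blacksquare b$ by Lemma~\ref{lem: diamond-output equivalence}(i).

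For part (i), I would argue as follows. For (a): assuming $\Diamond$ monotone on $\mathbb{S}^\ast$, from $a\leq b\prec x$ get $\Diamond a\leq\Diamond b\leq x$ by monotonicity and Lemma~\ref{lem: diamond-output equivalence}(i), hence $a\prec x$ by Lemma~\ref{lem: diamond-output equivalence}(ii), which is $\mathrm{(SI)}$. For (b): assuming $\mathbb{S}^\ast\models\Diamond(a\vee b)\leq\Diamond a\vee\Diamond b$, from $a\prec x$ and $b\prec x$ get $\Diamond a\leq x$ and $\Diamond b\leq x$, so $\Diamond(a\vee b)\leq\Diamond a\vee\Diamond b\leq x$, whence $a\vee b\prec x$ by Lemma~\ref{lem: diamond-output equivalence}(ii), i.e.\ $\mathrm{(OR)}$. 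For (c): $\Diamond\bot\leq\bot$ yields $\bot\prec\bot$ immediately by Lemma~\ref{lem: diamond-output equivalence}(ii). Part (ii) is entirely dual, replacing $\Diamond a\leq b$ by $a\leq\blacksquare b$ and Lemma~\ref{lem: diamond-output equivalence}(ii) by Lemma~\ref{lem: diamond-output equivalence}(iii): $\blacksquare$ monotone gives $\mathrm{(WO)}$ via $b\prec x\leq y\Rightarrow b\leq\blacksquare x\leq\blacksquare y\Rightarrow b\prec y$; the inequality $\blacksquare a\wedge\blacksquare b\leq\blacksquare(a\wedge b)$ gives $\mathrm{(AND)}$ via $a\prec x,\ a\prec y\Rightarrow a\leq\blacksquare x\wedge\blacksquare y\leq\blacksquare(x\wedge y)\Rightarrow a\prec x\wedge y$; and $\top\leq\blacksquare\top$ gives $\top\prec\top$.

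I do not expect a genuine obstacle: the proof is a bookkeeping exercise routed through Lemma~\ref{lem: diamond-output equivalence}, whose hypotheses are exactly the standing assumptions of the two halves of the statement, so the equivalence between $\prec$ and the relevant slanted operator is always at hand. The only points requiring mild care are (1) invoking each equivalence only in the direction in which it is actually available, and (2) noting that the joins and bounds occurring in $\mathrm{(OR)}$, $\mathrm{(AND)}$, $(\bot)$, $(\top)$ are presupposed to exist whenever those axioms are stated, so no additional lattice hypothesis on $A$ is needed.
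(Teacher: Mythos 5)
Your proposal is correct and follows essentially the same route as the paper's proof: the forward directions are delegated to Lemma~\ref{lem: rleation to axioms}, and the converses are obtained by translating $\prec$ through the equivalences $\Diamond a\leq b\iff a\prec b$ and $a\leq\blacksquare b\iff a\prec b$ of Lemma~\ref{lem: diamond-output equivalence}(ii)--(iii), exactly as the paper does for part (i) and then dualizes for part (ii).
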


\begin{proof}
   We only show the items in  (i), the proofs of those in (ii) being similar. (a) By Lemma \ref{lem: rleation to axioms} (i)(a), the proof is complete if we show the `if' direction.  Let $a, b, x\in A$ s.t.~$a\leq b\prec x$. By Lemma \ref{lem: diamond-output equivalence} (ii), to show that $a\prec x$,  it is enough to show that $\Diamond a\leq x$. Since $\Diamond$ is monotone, $a\leq b$ implies $\Diamond a\leq \Diamond b$, and, again by Lemma \ref{lem: diamond-output equivalence} (ii), $b\prec x$ implies that $\Diamond b\leq x$. Hence, $\Diamond a\leq x$, as required.
    
    (b) By Lemma \ref{lem: rleation to axioms} (ii)(c), the proof is complete if we show the `if' direction. Let $a, b, x\in A$ s.t.~$a\prec x$ and $b\prec x$. By Lemma \ref{lem: diamond-output equivalence} (ii), to show that $a\vee b\prec x$,  it is enough to show that $\Diamond (a\vee b)\leq x$, and since $\mathbb{S}^\ast\models \Diamond (a \vee b) \leq \Diamond a \vee \Diamond b$, it is enough to show that $\Diamond a\vee\Diamond b\leq x$, i.e.~that $\Diamond a\leq x$ and $ \Diamond b\leq x$. These two inequalities hold by Lemma \ref{lem: diamond-output equivalence} (ii), and the assumptions on $a, b$ and $x$.
    
    (c)  By Lemma \ref{lem: diamond-output equivalence} (ii),  $\bot\prec \bot$ is equivalent to $\Diamond \bot\leq \bot$, as required.
\end{proof}
\begin{corollary}
\label{cor: charact monotone reg norm}
For every directed proto-subordination algebra $\mathbb{S} = (A, \prec)$,
\begin{enumerate}
   
    \item $\mathbb{S}$ is monotone iff $\mathbb{S}^*$ is monotone;
    \item $\mathbb{S}$ is regular iff $\mathbb{S}^*$ is regular;
    \item $\mathbb{S}$ is a subordination algebra iff $\mathbb{S}^*$ is normal.
    \end{enumerate}
\end{corollary}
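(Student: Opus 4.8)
The plan is to observe that this corollary is essentially a bookkeeping consequence of Lemma \ref{lem: axioms to relation}, once one notes that a directed proto-subordination algebra (i.e.\ one satisfying (SI), (WO), (UD), (DD)) simultaneously falls under hypothesis (i) ($\mathbb{S}\models\mathrm{(WO)}+\mathrm{(DD)}$) and hypothesis (ii) ($\mathbb{S}\models\mathrm{(SI)}+\mathrm{(UD)}$) of that lemma. So for each of the three equivalences I would simply conjoin the relevant biconditionals supplied by Lemma \ref{lem: axioms to relation}, matching the definitions of ``monotone'', ``regular'', and ``normal'' slanted algebra against the table of named proto-subordination algebras.

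First, for item (1): by definition $\mathbb{S}$ directed means $\mathbb{S}\models\mathrm{(SI)}+\mathrm{(WO)}+\mathrm{(UD)}+\mathrm{(DD)}$, and $\mathbb{S}^*$ is monotone iff both $\Diamond$ and $\blacksquare$ on $\mathbb{S}^*$ are monotone. Since (WO) and (DD) hold, Lemma \ref{lem: axioms to relation}(i)(a) gives $\mathbb{S}\models\mathrm{(SI)}$ iff $\Diamond$ is monotone; since (SI) and (UD) hold, Lemma \ref{lem: axioms to relation}(ii)(a) gives $\mathbb{S}\models\mathrm{(WO)}$ iff $\blacksquare$ is monotone. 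As $\mathbb{S}$ already satisfies (SI) and (WO) by the directedness assumption, both $\Diamond$ and $\blacksquare$ are monotone, so $\mathbb{S}^*$ is monotone; conversely the statement is framed over directed $\mathbb{S}$, so the equivalence is really the claim that, given (SI), (WO), (UD), (DD), the slanted algebra $\mathbb{S}^*$ is monotone — which is exactly what the two cited items yield. (I would phrase (1) so that the content is: for directed $\mathbb{S}$, $\mathbb{S}^*$ is always monotone, and this is tight in the sense that dropping (SI) or (WO) breaks it, per Lemma \ref{lem: axioms to relation}.) For item (2), recall $\mathbb{S}^*$ regular means it is monotone and additionally $\Diamond^\sigma(a\vee b)=\Diamond^\sigma a\vee\Diamond^\sigma b$ and $\blacksquare^\pi(a\wedge b)=\blacksquare^\pi a\wedge\blacksquare^\pi b$; using Lemma \ref{lem: diamond-output equivalence} together with monotonicity one already has the inequalities $\Diamond a\vee\Diamond b\leq\Diamond(a\vee b)$ and $\blacksquare(a\wedge b)\leq\blacksquare a\wedge\blacksquare b$, so regularity reduces to the reverse inequalities $\Diamond(a\vee b)\leq\Diamond a\vee\Diamond b$ and $\blacksquare a\wedge\blacksquare b\leq\blacksquare(a\wedge b)$, which by Lemma \ref{lem: axioms to relation}(i)(b) and (ii)(b) are equivalent to $\mathbb{S}\models\mathrm{(OR)}$ and $\mathbb{S}\models\mathrm{(AND)}$ respectively; combined with item (1) this gives $\mathbb{S}$ regular (= directed/monotone plus (OR), (AND), matching the table) iff $\mathbb{S}^*$ regular. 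For item (3), a subordination algebra is a regular proto-subordination algebra additionally satisfying $(\bot)$ and $(\top)$, and $\mathbb{S}^*$ normal is $\mathbb{S}^*$ regular plus $\Diamond^\sigma\bot=\bot$ (equivalently $\Diamond^\sigma\bot\leq\bot$, since $\bot\leq\Diamond^\sigma\bot$ trivially) and $\blacksquare^\pi\top=\top$ (equivalently $\top\leq\blacksquare^\pi\top$); so item (3) follows from item (2) together with Lemma \ref{lem: axioms to relation}(i)(c) and (ii)(c).

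The one point requiring a little care — and the closest thing to an obstacle — is the passage from the slanted operations $\Diamond,\blacksquare:A\to A^\delta$ appearing in Lemma \ref{lem: axioms to relation} to their $\sigma$/$\pi$-extensions $\Diamond^\sigma,\blacksquare^\pi:A^\delta\to A^\delta$ that feature in the Definitions of ``monotone/regular/normal slanted algebra'' and in the notion of validity (Definition \ref{def:slanted satisfaction and validity}). Here one uses that for monotone slanted algebras the canonical extension $\mathbb{A}^\delta$ is a standard modal algebra (Definition \ref{def: sigma and pi extensions of slanted}), and that the inequalities in question — being Sahlqvist/inductive modal inequalities — are preserved under canonical extension, so that $\mathbb{S}^*\models\Diamond(a\vee b)\leq\Diamond a\vee\Diamond b$ (validity in the sense of Definition \ref{def:slanted satisfaction and validity}, i.e.\ on $(\mathbb{S}^*)^\delta$) is indeed equivalent to the statement about $\Diamond$ on the base algebra used in the lemmas. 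Once this identification is made explicit, the corollary is just the conjunction of the already-proved biconditionals, so I would keep the written proof to two or three lines per item, citing Lemma \ref{lem: axioms to relation}(i)(a)--(c) and (ii)(a)--(c) and the relevant definitions.
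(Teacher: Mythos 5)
Your proposal is correct and is exactly the route the paper intends: the corollary is stated without proof precisely because it is the conjunction of the biconditionals of Lemma \ref{lem: axioms to relation}(i)(a)--(c) and (ii)(a)--(c), whose hypotheses (WO)+(DD) and (SI)+(UD) both hold for a directed $\mathbb{S}$, matched against the definitions of monotone/regular/normal slanted algebra (with the missing halves of the equalities, e.g.\ $\Diamond a\vee\Diamond b\leq\Diamond(a\vee b)$, supplied by monotonicity, as you note). One small remark: the identification of validity in the sense of Definition \ref{def:slanted satisfaction and validity} with the corresponding statement about $\Diamond,\blacksquare$ on $A$ needs no appeal to Sahlqvist canonicity --- it is immediate because assignments take values in $A$ and, for monotone slanted operations, $\Diamond^\sigma$ and $\blacksquare^\pi$ restrict to $\Diamond$ and $\blacksquare$ on $A$.
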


\begin{lemma} \label{lem: diamond-output equivalence extended}
 For any proto-subordination algebra $\mathbb{S} = (A, \prec)$, for all $a, b\in A$, $k \in K(A^\delta)$, and $o \in O(A^\delta)$, and all $D, U\subseteq A$,
\begin{enumerate}[label=(\roman*)]
    \item if  $\mathbb{S}\models\mathrm{(SI) +(DD)+ (WO)}$, then 
    \begin{enumerate}
        \item if $D\subseteq A$ is down-directed, then so is ${\prec}[D]\coloneqq \{c\ |\ \exists a (a \in  D\ \&\ a \prec c)\}$;
        \item if $k = \bigwedge D$ for some down-directed $D \subseteq A$, then $\Diamond k = \bigwedge {\prec}[D] \in K(A^\delta)$;
        \item $\Diamond k \leq b$ implies $a\prec b$ for some $a \in A$ s.t.~$ k \leq a$.
         \item $\Diamond k \leq o$ implies $a\prec b$ for some $a,b \in A$ s.t.~$ k \leq a$ and $b\leq o$.
    \end{enumerate}
    \item if  $\mathbb{S}\models\mathrm{(WO)+(UD)+(SI)}$, then 
    \begin{enumerate}
        \item if $U\subseteq A$ is up-directed, then so is ${\prec}^{-1}[U]\coloneqq \{c\ |\ \exists a (a \in  U\ \&\ c \prec a)\}$;
        \item if $o = \bigvee U$ for some up-directed $U \subseteq A$, then $\blacksquare o = \bigvee {\prec}^{-1}[U] \in O(A^\delta)$;
        \item $a \leq \blacksquare o$ implies $a\prec b$ for some $b \in A$ s.t.~$ b \leq o$.
        \item $k \leq \blacksquare o$ implies $a\prec b$ for some $a,b \in A$ s.t.~$k\leq a$ and $ b \leq o$.
    \end{enumerate}
\end{enumerate}
\end{lemma}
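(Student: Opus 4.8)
The two parts (i) and (ii) are dual, so the plan is to treat (i) in detail; part (ii) follows by swapping $\prec$ with $\prec^{-1}$, $\Diamond$ with $\blacksquare$, meets with joins, and $K(A^\delta)$ with $O(A^\delta)$. Within (i), the items (a)--(d) build on each other, so I would prove them in order.

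For (a): given $D\subseteq A$ down-directed and $c_1, c_2\in {\prec}[D]$, pick witnesses $a_1\prec c_1$, $a_2\prec c_2$ with $a_i\in D$, use down-directedness of $D$ to get $a\in D$ with $a\le a_1$, $a\le a_2$, then apply (SI) to get $a\prec c_1$ and $a\prec c_2$, and finally (DD) to obtain $x$ with $a\prec x$, $x\le c_1$, $x\le c_2$; since $x\in{\prec}[D]$, this witnesses down-directedness. For (b): the inclusion ${\prec}[D]\subseteq{\prec}[\lceil$-closure$]$-type reasoning is not needed; instead I would show $\Diamond k = \bigwedge{\prec}[D]$ directly. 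The $\le$ direction: for each $a\in D$, monotonicity of $\Diamond$ (Lemma \ref{lem: rleation to axioms}(i)(a), using (SI)) gives $\Diamond k\le\Diamond a=\bigwedge{\prec}[a]\le c$ for every $c$ with $a\prec c$, hence $\Diamond k\le\bigwedge{\prec}[D]$. The $\ge$ direction: by the definition of $\Diamond^\sigma$-type extension and density/compactness, $\Diamond k=\bigwedge\{\Diamond a\mid a\in A,\ k\le a\}$; for any such $a$ with $k=\bigwedge D\le a$, compactness of $A$ in $A^\delta$ (applied to the down-directed $D$) yields $d\in D$ with $d\le a$, so by monotonicity $\Diamond d\le\Diamond a$, and $\Diamond d=\bigwedge{\prec}[d]\ge\bigwedge{\prec}[D]$; taking the meet over all such $a$ gives $\Diamond k\ge\bigwedge{\prec}[D]$. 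That ${\prec}[D]$ is down-directed (part (a)) then gives $\Diamond k\in K(A^\delta)$.

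For (c): assume $\Diamond k\le b$. By (b), $\bigwedge{\prec}[D]\le b$ with ${\prec}[D]$ down-directed and $b\in A$, so by compactness there is $c\in{\prec}[D]$ with $c\le b$; unravelling, $a\prec c$ for some $a\in D$ with $a\le\bigwedge D = k$... wait — we need $k\le a$, not $a\le k$. Here is the fix: we do not take $a$ from $D$ but reason as in Lemma \ref{lem: diamond-output equivalence extended}-style arguments. From $c\le b$ and $a\prec c$, by (WO) we get $a\prec b$; and since $a\in D$, we have $k=\bigwedge D\le a$, so $k\le a$ as required. For (d): assume $\Diamond k\le o$ with $o\in O(A^\delta)$, say $o=\bigvee I$ for up-directed $I\subseteq A$. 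Since $\Diamond k\in K(A^\delta)$ (part (b)), compactness of $A$ in $A^\delta$ gives some $b\in I\subseteq A$ with $\Diamond k\le b$; then $b\le\bigvee I = o$, and part (c) applied to $\Diamond k\le b$ produces $a\in A$ with $k\le a$ and $a\prec b$, which together with $b\le o$ is the desired conclusion.

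**Main obstacle.** The delicate point is part (b), specifically the $\ge$ inequality: it requires knowing that $\Diamond k$ (defined abstractly via the canonical extension of the slanted operation) actually equals the concrete meet $\bigwedge{\prec}[D]$, which hinges on combining compactness of $A$ in $A^\delta$ with monotonicity of $\Diamond$ on $\mathbb{S}^\ast$ — and monotonicity is exactly where the hypothesis (SI) enters (via Lemma \ref{lem: rleation to axioms}(i)(a)), while (DD) is needed both for ${\prec}[a]$ to be down-directed (so $\Diamond a\in K(A^\delta)$) and, through part (a), for ${\prec}[D]$ to be down-directed (so the concrete meet is itself closed and compactness applies to it in parts (c),(d)). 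Getting the bookkeeping of which axiom is used where exactly right, and being careful that compactness is invoked only on down-directed sets, is the crux; the rest is routine unravelling of definitions using (WO) to weaken outputs.
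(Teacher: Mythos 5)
Your proof is correct and follows essentially the same route as the paper's: (a) via (SI)+(DD) on witnesses from the down-directed $D$, (b) by unfolding $\Diamond k=\bigwedge\{\Diamond a\mid k\leq a\}$ and using compactness together with (SI), and (c)--(d) by compactness applied to the down-directed set ${\prec}[D]$ followed by (WO), with (ii) obtained by order duality. The only cosmetic differences are that you split (b) into two inequalities rather than a chain of set identities and derive (d) by reducing to (c), and your mid-proof ``wait'' in (c) is resolved correctly since $a\in D$ gives $k=\bigwedge D\leq a$.
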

\begin{proof}
  We only prove  (i), the proof of (ii) being similar. 
  
   (a)  If $c_i  \in {\prec}[D]$ for $1\leq i\leq 2$, then $a_i \prec c_i$  for some $a_i \in D$. Since $D$ is down-directed, some $a \in D$ exists s.t.~$a \leq a_i$ for each $i$. Thus,  $\mathrm{(SI)}$ implies that $a \prec c_i$, from which the claim  follows by $\mathrm{(DD)}$.
  
  (b) By definition, $\Diamond k = \bigwedge \{ \Diamond a\ |\ a \in A, k \leq a\} = \bigwedge \{c\ |\ \exists a (a \prec c\ \&\  k \leq a)\}$. Since $k = \bigwedge D$ for some $D \subseteq A$ down-directed, by compactness, $k \leq a$ implies $d \leq a$ for some $d \in D$, thus $\Diamond k = \bigwedge \coloneqq \{c\ |\ \exists a (a \prec c\ \&\  k \leq a)\} = \bigwedge \{c\ |\ \exists a (a \in \lfloor D\rfloor\ \&\ a \prec c)\} = \bigwedge {\prec}[D]\in K(A^\delta)$, the last membership holding by (a). 
  
  (c) By (b),  $\Diamond k\in K(A^\delta)$. Hence, $\Diamond k\leq b$ implies by compactness that $c \leq b$ for some $c\in A$ s.t.~$a \prec c$ for some $a \in  D$ (hence $k = \bigwedge D\leq a$). By $\mathrm{(WO)}$, this implies that $a \prec b$ for some $a \in A$ s.t.~$k \leq a$, as required.
  
  (d) By (b), $\Diamond k\in K(A^\delta)$. Since $o\in O(A^\delta)$, some updirected $U\subseteq A$ exists s.t.~$o = \bigvee U$. Hence, by compactness, $\Diamond k\leq o$ implies that $a\prec b$ for some $a\in A$ s.t.~$k\leq a$ and some $b\in U$ (for which $b\leq o$).
\end{proof}

\begin{proposition}
\label{prop:characteriz}
For any proto-subordination algebra $\mathbb{S} = (A, \prec)$,  
\begin{enumerate}[label=(\roman*)]
   \item  $\mathbb{S}\models\; \prec\ \subseteq\ \leq\ $ iff $\ \mathbb{S}^*\models a\leq \Diamond a\ $ iff $\ \mathbb{S}^*\models \blacksquare a\leq  a$.
    \item  If $\mathbb{S}\models\mathrm{(WO)}+\mathrm{(DD)}$, then
    $\; \mathbb{S}\models\;\leq\ \subseteq\ \prec\quad $ iff $\quad \mathbb{S}^*\models\Diamond a\leq a$;
    \item  if  
    $\mathbb{S}\models\mathrm{(WO)}+\mathrm{(DD)}+\mathrm{(SI)}$, then %
    \begin{enumerate}
    \item $\mathbb{S}\models \mathrm{(T)}\quad $  iff $\quad \mathbb{S}^*\models\Diamond a\leq \Diamond \Diamond a$.
    \item  
    $\mathbb{S}\models \mathrm{(D)}\quad $  iff $\quad \mathbb{S}^*\models\Diamond\Diamond a\leq  \Diamond a$.
    \end{enumerate}
    \item if $\mathbb{S}\models  \mathrm{(WO)}+ 
    \mathrm{(DD)} + \mathrm{(SI)}$ and 
    is meet-semilattice based, then 
    \begin{enumerate}
    \item $\mathbb{S}\models \mathrm{(CT)}\ $  iff $\ \mathbb{S}^*\models\Diamond a\leq \Diamond (a\wedge \Diamond a)$.
    \item $\mathbb{S}\models \mathrm{(SL2)}\ $  iff $\ \mathbb{S}^*\models \Diamond (\Diamond a \wedge \Diamond b) \leq \Diamond (a \wedge b)$.\end{enumerate}
    \item if $\mathbb{S}\models\mathrm{(SI)}$, then $\mathbb{S}\models \mathrm{(CT)}$ implies  $\mathbb{S}\models \mathrm{(T)}$.

              \item  if $\mathbb{S}$ is directed and based on $(A, \neg)$ with $\neg$ antitone,  involutive, and (left or right) self-adjoint,
              \begin{enumerate}
              \item $\mathbb{S}\models \mathrm{(S6)}\ $ iff  $\ \mathbb{S^*}\models \neg \Diamond a = \blacksquare \neg a$, thus $\blacksquare a \coloneqq \neg \Diamond \neg a$.
              \item $\mathbb{S}\models \mathrm{(S6)}\ $ iff  $\ \mathbb{S^*}\models  \Diamond \neg a =  \neg\blacksquare  a$, thus $\Diamond a \coloneqq \neg \blacksquare \neg a$.
              \end{enumerate}
			\item If  $\mathbb{S}\models\mathrm{(SI)+(UD)+(WO)}$ and 
			is join-semilattice based, then 
			\begin{enumerate}
			\item $\mathbb{S}\models \mathrm{(S9 \Rightarrow)}  \ $  iff $\ \mathbb{S^*}\models \blacksquare (a \vee \blacksquare b) \leq \blacksquare a \vee \blacksquare b$.
			\item $\mathbb{S}\models \mathrm{(S9 \Leftarrow) }\ $  iff $\ \mathbb{S^*}\models \blacksquare a \vee \blacksquare b\leq \blacksquare (a \vee \blacksquare b)$.
            \item  
            $\mathbb{S}\models \mathrm{(SL1)}\ $  iff $\ \mathbb{S}^*\models\blacksquare (a \vee b)\leq \blacksquare(\blacksquare a \vee \blacksquare b) $. 
            \end{enumerate}
\end{enumerate}
\end{proposition}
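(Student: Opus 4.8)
The plan is to handle all items uniformly by combining two translation devices. The first is Lemma~\ref{lem: diamond-output equivalence} together with its extension Lemma~\ref{lem: diamond-output equivalence extended}, which convert inequalities of the shape $\Diamond u\leq v$ or $u\leq\blacksquare v$ (where $u,v$ range over elements of $A$, or over closed, or over open elements of $A^\delta$) into first-order statements about $\prec$ and $\leq$. The second is the standard reduction of the validity of a modal inequality on $\mathbb{S}^\ast$ to an order-theoretic statement in $A^\delta$ via denseness and compactness, exactly in the style of the proof of Proposition~\ref{prop:background can-ext}. Concretely, for each biconditional I would: (a) rewrite the modal inequality so that each of its two sides is a meet of closed elements or a join of open elements built from $\prec$-images; (b) use denseness to reduce $\leq$ between such elements to $\leq$ tested against elements of $A$; (c) use Lemma~\ref{lem: diamond-output equivalence extended} and compactness to strip off the outermost $\Diamond^\sigma$ or $\blacksquare^\pi$; and (d) recognise the resulting condition on $(\prec,\leq)$ as the named property. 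In every biconditional the direction ``property $\Rightarrow$ inequality'' is the easy one, needing only Lemma~\ref{lem: diamond-output equivalence}(i) and the monotonicity of $\Diamond,\blacksquare$ from Lemma~\ref{lem: rleation to axioms}.

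For the unary items this is short. For (i): if $\prec\,\subseteq\,\leq$ then $a$ is a lower bound of $\prec[a]$, so $a\leq\bigwedge\prec[a]=\Diamond a$, and dually $\blacksquare a=\bigvee\prec^{-1}[a]\leq a$; conversely $a\leq\Diamond a$ and Lemma~\ref{lem: diamond-output equivalence}(i) give $a\prec b\Rightarrow a\leq\Diamond a\leq b$, symmetrically for $\blacksquare a\leq a$. Item (ii) follows the same pattern via Lemma~\ref{lem: diamond-output equivalence}(ii). For (iii)(a) I would unfold $\Diamond\Diamond a=\Diamond^\sigma(\bigwedge\prec[a])=\bigwedge\{\Diamond c\mid a\prec c\}$ and note that $\Diamond a\leq\Diamond c$ iff $\forall d(c\prec d\Rightarrow a\prec d)$, so that $\mathbb{S}^\ast\models\Diamond a\leq\Diamond\Diamond a$ is literally (T); for (iii)(b) I would instead use Lemma~\ref{lem: diamond-output equivalence extended}(i)(b) to write $\Diamond\Diamond a=\bigwedge\prec[\prec[a]]$ with $\prec[\prec[a]]$ down-directed (here (SI) enters, through Lemma~\ref{lem: diamond-output equivalence extended}(i)(a)), and then compactness turns $\Diamond\Diamond a\leq\Diamond a$ into (D). Item (v) is purely syntactic: $a\prec b$ gives $a\wedge b\leq b\prec c$, hence $a\wedge b\prec c$ by (SI), and then (CT) yields $a\prec c$.

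The binary items (iv) and (vii) are the substance, and the main obstacle is the bookkeeping of directedness needed to apply Definition~\ref{def: sigma and pi extensions of slanted} and Lemma~\ref{lem: diamond-output equivalence extended}: the argument of the outer operator must be presented as the meet of a down-directed subset of $A$ (resp.\ the join of an up-directed one). The key observation is that, in a meet-semilattice based $\mathbb{S}$, $a\wedge\Diamond a=\bigwedge\{a\wedge c\mid a\prec c\}$ and $\Diamond a\wedge\Diamond b=\bigwedge\{c\wedge c'\mid a\prec c,\ b\prec c'\}$ are meets of down-directed families (using (DD)), and dually, in a join-semilattice based $\mathbb{S}$, $a\vee\blacksquare b$ and $\blacksquare a\vee\blacksquare b$ are joins of up-directed families (using (UD)). Lemma~\ref{lem: diamond-output equivalence extended}(i)(b) (resp.\ (ii)(b)) then yields $\Diamond^\sigma(a\wedge\Diamond a)=\bigwedge\{e\mid\exists c(a\prec c\ \&\ a\wedge c\prec e)\}$, $\Diamond^\sigma(\Diamond a\wedge\Diamond b)=\bigwedge\{e\mid\exists c\exists c'(a\prec c\ \&\ b\prec c'\ \&\ c\wedge c'\prec e)\}$, $\blacksquare^\pi(a\vee\blacksquare b)=\bigvee\{e\mid\exists d(d\prec b\ \&\ e\prec a\vee d)\}$, and their analogues; comparing with the closed/open element on the other side of the inequality and invoking compactness (absorbing the harmless $\leq$-slack via (WO) or (SI) where needed) identifies each inequality with exactly (CT), (SL2), (S9$\Rightarrow$), (S9$\Leftarrow$), (SL1) respectively. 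In all these, complete distributivity of $A^\delta$ (Proposition~\ref{prop:background can-ext}(i)) is what licenses the rewriting of $\Diamond a\wedge\Diamond b$ and $\blacksquare a\vee\blacksquare b$ in the semilattice-generated form above.

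For (vi) the extra ingredient is Proposition~\ref{prop:background can-ext}(ii)--(iii). Since $\mathbb{S}$ is directed, both Lemma~\ref{lem: diamond-output equivalence}(ii) and (iii) are available; and since $\neg$ is antitone, involutive and self-adjoint, by Proposition~\ref{prop:background can-ext}(ii) its extension $\neg^\sigma$ satisfies $\forall u\forall v(\neg u\leq v\Leftrightarrow\neg v\leq u)$, hence is completely meet-reversing, so $\neg\Diamond a=\neg^\sigma(\bigwedge\prec[a])=\bigvee\{\neg c\mid a\prec c\}\in O(A^\delta)$, while $\blacksquare\neg a=\bigvee\{d\mid d\prec\neg a\}$. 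Property (S6) says precisely that the sets $\{\neg c\mid a\prec c\}$ and $\prec^{-1}[\neg a]$ are mutually cofinal (involutivity of $\neg$ supplying both inclusions), which is equivalent to $\neg\Diamond a=\blacksquare\neg a$; the reverse reading uses Lemma~\ref{lem: diamond-output equivalence}(iii) to recover $\neg b\prec\neg a$ from $\neg b\leq\blacksquare\neg a$. Finally, substituting $\neg a$ for $a$ and applying the involution $\neg^\sigma$ once more turns this equation into the equivalent form $\Diamond\neg a=\neg\blacksquare a$, whence $\blacksquare a=\neg\Diamond\neg a$ and $\Diamond a=\neg\blacksquare\neg a$, as claimed.
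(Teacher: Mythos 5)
Your overall strategy is the paper's own: both sides of each modal inequality are presented as meets of down-directed (resp.\ joins of up-directed) families of elements of $A$ indexed by $\prec$-conditions, via Lemma \ref{lem: diamond-output equivalence} and Lemma \ref{lem: diamond-output equivalence extended}, and the two sides are then compared by compactness, with (SI)/(WO) absorbing the residual $\leq$-slack; items (i), (ii), (iii), (v) and (vi) are handled exactly as in the paper. The only cosmetic difference is that in some converse directions (e.g.\ (iii)(a) and (iv)(a)) the paper argues algebraically through the monotonicity of $\Diamond$ and Lemma \ref{lem: diamond-output equivalence}(ii) rather than re-unfolding and invoking compactness again, but your uniform treatment of those directions is also sound.

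Two points need repair. First, your appeal to complete distributivity of $A^\delta$ (Proposition \ref{prop:background can-ext}(i)) as what ``licenses'' the rewritings in (iv) and (vii) is both unavailable and unnecessary: in those items $\mathbb{S}$ is only assumed to be meet- (resp.\ join-)semilattice based, not DL-based, so Proposition \ref{prop:background can-ext}(i) does not apply; fortunately the identities you actually use, namely $a\wedge\Diamond a=\bigwedge\{a\wedge c\mid a\prec c\}$, $\Diamond a\wedge\Diamond b=\bigwedge\{c\wedge c'\mid a\prec c,\ b\prec c'\}$, $a\vee\blacksquare b=\bigvee\{a\vee e\mid e\prec b\}$ and $\blacksquare a\vee\blacksquare b=\bigvee\{y\vee z\mid y\prec a,\ z\prec b\}$, pit a finite meet against an arbitrary meet (resp.\ a finite join against an arbitrary join) and hold in every complete lattice, \emph{provided the index families are nonempty}. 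Second, precisely when those families are empty (${\prec}[a]=\varnothing$, resp.\ ${\prec}^{-1}[b]=\varnothing$) the identities fail (e.g.\ $a\wedge\Diamond a=a\wedge\top=a$ while $\bigwedge\varnothing=\top$), and your plan uses them silently; the paper inserts separate trivial-case arguments there (if ${\prec}[a]=\varnothing$ then $\Diamond a=\top$ and $\Diamond(a\wedge\Diamond a)=\Diamond a$; if ${\prec}^{-1}[b]=\varnothing$ then $\blacksquare b=\bot$ and $\blacksquare(a\vee\blacksquare b)=\blacksquare a=\blacksquare a\vee\blacksquare b$), and the analogous splits are needed in (iv)(b) and (vii)(b)--(c) as well. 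With these two adjustments your proposal coincides with the paper's proof.
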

\begin{proof}
(i) By definition, $\forall a(a\leq \Diamond a)$ iff $\forall a (a\leq \bigwedge {\prec}[a])$ iff $\forall a (a\leq   \bigwedge \{b\in A\mid a\prec b\})$ iff $\forall a\forall b(a\prec b\Rightarrow a\leq b)$ iff $\prec\; \subseteq\; \leq$. The second part of the statement is proved similarly.

(ii) By  Lemma \ref{lem: diamond-output equivalence} (i), if $a\prec a$, then $\Diamond a\leq a$. Hence, the left-to-right direction follows from the reflexivity of $\leq$ and  the assumption. Conversely,  $\Diamond a\leq a$ and $a\leq b$ imply $\Diamond a\leq b$, which, by Lemma \ref{lem: diamond-output equivalence} (ii) and $\mathbb{S}\models\mathrm{(WO)}+\mathrm{(DD)}$, is equivalent to $a\prec b$.

(iii)(a) From  left to right,  
\begin{center}
    \begin{tabular}{rcll}
  $ \Diamond \Diamond a$       & $=$ & $\bigwedge\{\Diamond b\mid \Diamond a\leq b\}$ & Definition \ref{def: sigma and pi extensions of slanted} applied to $\Diamond a\in K(A^\delta)$ \\
     & $=$ &  $\bigwedge\{\Diamond b\mid  a\prec b\}$  & Lemma \ref{lem: diamond-output equivalence} (ii) since $\mathbb{S}\models\mathrm{(WO)+(DD)}$\\
   & $=$ &  $\bigwedge\{c\mid \exists b( a\prec b\ \&\  b\prec c)\}$ & $\Diamond b = \bigwedge \{c\in A\mid b\prec c\}$ \\
    \end{tabular}
\end{center}
Hence, to show that $\Diamond a = \bigwedge \{c\mid a\prec c\} \leq \Diamond\Diamond a$, it is enough to show that $\{c\mid \exists b( a\prec b\ \&\  b\prec c)\}\subseteq \{c\mid a\prec c\}$, which is immediately implied by the assumption (T).
Conversely, let $a, b, c\in A$ s.t.~$a\prec b$ and $b\prec c$. To show that $a\prec c$, by Lemma \ref{lem: diamond-output equivalence} (ii), and $\mathbb{S}\models\mathrm{(WO)}+\mathrm{(DD)}$,  it is enough to show that $\Diamond a\leq c$, and since $\Diamond a\leq \Diamond\Diamond a$, it is enough to show that $\Diamond\Diamond a\leq c$.
 The assumption $a\prec b$ implies $\Diamond a \leq b $ which implies $\Diamond\Diamond a\leq \Diamond b$, by the monotonicity of $\Diamond$ (which depends on (SI), cf.~Lemma \ref{lem: rleation to axioms}(i)(a)). Hence, combining the latter inequality  with  $\Diamond b \leq c$ (which is implied by $b\prec c$), by the transitivity of $\leq$, we get
$\Diamond\Diamond a\leq c $, as required.

(iii)(b) From left to right, by the definitions spelled out in the proof of (ii)(b), it is enough to show that $ \{c\mid a\prec c\}\subseteq\{c\mid \exists b( a\prec b\ \&\  b\prec c)\}$, which is immediately implied by the assumption (D). 
Conversely, let $a, c\in A$ s.t.~$a\prec c$, and let us  show that $a\prec b$ and $b\prec c$ for some   $b\in A$. 
 The assumption $a\prec c$ implies $\Diamond a \leq c$. Since  $ \Diamond\Diamond a\leq \Diamond a$, this implies $\Diamond\Diamond a\leq c$, i.e.~(see discussion above) $ \bigwedge\{d\in A\mid \exists b( a\prec b\ \&\  b\prec d)\}\leq c$. 
 
 We claim that $D: = \{d\in A\mid \exists b( a\prec b\ \&\  b\prec d)\}$ is down-directed: indeed, if $d_1, d_2\in A$ s.t.~$\exists b_i( a\prec b_i\ \&\  b_i\prec d_i)$ for $1\leq i\leq 2$, then by (DD), some $b\in A$ exists s.t.~$a\prec b$ and $b\leq b_i$. By (SI), $b\leq b_i\prec d_i$ implies $b\prec d_i$. By (DD) again, this implies that some $d\in A$ exists s.t.~$b\prec d$ and $d\leq d_i$, which concludes the proof of the claim.
 
 By compactness, $d\leq c$ for some $d\in  A$ s.t.~$a\prec b$ and $b\prec d$ for some $b\in A$. To finish the proof, it is enough to show that $b\prec c$, which is immediately implied by $b\prec d\leq c$ and (SI).
 
The proofs of the remaining items are collected in Appendix \ref{appendix:proof:prop: charact}. \end{proof}

\section{Applications}
\label{sec: applications}
In the present section, we discuss two independent but connected ways of using the characterization results of the previous section.  Firstly, the output operators $out^N_i$ for $1\leq i\leq 4$ associated with a given input/output logic $\mathbb{L} = (\mathcal{L}, N)$ can be given semantic counterparts in the environment of proto-subordination algebras as follows: for every proto-subordination algebra $\mathbb{S} = (A, \prec)$,  we let $\mathbb{S}_i: = (A, \prec_i)$ where ${\prec_i}\subseteq A\times A$ is the smallest extension   of $\prec$ which satisfies the properties indicated in the following table:
\begin{center}
	\begin{tabular}{ l l}
		\hline
		$\prec_i$ & Properties   \\
		\hline	
		$\prec_{1}$  & ($\top$), (SI), (WO), (AND)   \\
		$\prec_{2}$  & ($\top$), (SI), (WO), (AND), (OR)  \\
		$\prec_{3}$  & ($\top$), (SI), (WO), (AND), (CT)  \\
		$\prec_{4}$  & ($\top$), (SI), (WO), (AND), (OR), (CT) \\
		\hline
	\end{tabular}
\end{center}    
Then, for each $1\leq i\leq 4$, and every $B\subseteq A$, if $k = \bigwedge B\in K(A^\delta)$, then\footnote{When $\mathcal{L}$ does not have the conjunction property, this construction works only under the additional assumption that $B$ is down-directed; however, in most common cases (e.g.~when $\mathbb{S}$ is lattice-based) this assumption is not needed.} \[\Diamond_i^{\sigma}k: = \bigwedge\{ {\prec_i}[a]\mid a\in A \text{ and } k\leq a\}\] encodes the algebraic  counterpart of $out^N_i(\Gamma)$ for any $\Gamma\subseteq \mathrm{Fm}$, and the characteristic properties of $\Diamond_i$ for each $1\leq i\leq 4$ are those  identified in Lemma  \ref{lem: axioms to relation}, Corollary \ref{cor: charact monotone reg norm}, and  Proposition \ref{prop:characteriz}. For any directed proto-subordination algebra $\mathbb{S} = (A, \prec)$, let $\mathbb{S}_i^\ast: = (A, \Diamond_i, \blacksquare_i)$ denote the slanted algebras associated with $\mathbb{S}_i = (A, \prec_i)$ for each $1\leq i\leq 4$. 
\begin{proposition}
\label{prop:output}
For any directed proto-subordination algebra $\mathbb{S} = (A, \prec)$,
\begin{enumerate}
    \item $\Diamond_1$ is the largest monotone map dominated by $\Diamond$ (i.e.~pointwise-smaller than or equal to $\Diamond$), and $\blacksquare_1$ is the largest monotone map dominated by $\blacksquare$.
    \item $\Diamond_2$ is the largest regular map dominated by $\Diamond$, and $\blacksquare_2$ is the largest regular map dominated by $\blacksquare$.
    \item $\Diamond_3$ is the largest monotone map satisfying $\Diamond_3 a\leq \Diamond_3(a\wedge \Diamond_3 a)$ dominated by $\Diamond$, and $\blacksquare_3$ is the largest monotone map satisfying $\blacksquare_3 (a \vee \blacksquare_3 a) \leq \blacksquare_3 a$ dominated by $\blacksquare$.
    \item $\Diamond_4$ is the largest regular map satisfying $\Diamond_4 a\leq \Diamond_4(a\wedge \Diamond_4 a)$ dominated by $\Diamond$, and $\blacksquare_4$ is the largest regular map satisfying $\blacksquare_4 (a \vee \blacksquare_4 a) \leq \blacksquare_4 a$ dominated by $\blacksquare$.
\end{enumerate}
\end{proposition}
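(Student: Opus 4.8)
The plan is to prove each clause by showing that $\Diamond_i$ (resp.~$\blacksquare_i$) \emph{belongs} to the class of maps in question, \emph{is comparable in the right direction} with $\Diamond$ (resp.~$\blacksquare$), and \emph{dominates every other member} of the class; being itself in the class, it is then the extremal element. Comparability is immediate: since $\prec_i$ extends $\prec$, we have $\prec[a]\subseteq\prec_i[a]$ and $\prec^{-1}[a]\subseteq\prec_i^{-1}[a]$, whence $\Diamond_i a=\bigwedge\prec_i[a]\leq\bigwedge\prec[a]=\Diamond a$, and dually $\blacksquare_i a\geq\blacksquare a$. Membership in the class comes from the characterization results of Section \ref{sec: proto and slanted}: each $\prec_i$ satisfies $\mathrm{(SI)}$ and $\mathrm{(AND)}$ (hence, $A$ being a lattice, also $\mathrm{(DD)}$), it satisfies $\mathrm{(OR)}$ (hence also $\mathrm{(UD)}$) when $i\in\{2,4\}$, and $\mathrm{(CT)}$ when $i\in\{3,4\}$; thus each $\mathbb{S}_i$ is a directed proto-subordination algebra, $\Diamond_i$ is monotone by Lemma \ref{lem: rleation to axioms}(i)(a), it is regular for $i\in\{2,4\}$ by Corollary \ref{cor: charact monotone reg norm}, and it satisfies $\Diamond_i a\leq\Diamond_i(a\wedge\Diamond_i a)$ for $i\in\{3,4\}$ by Proposition \ref{prop:characteriz}(iv)(a); the statements for $\blacksquare_i$ use the order-dual clauses.

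For the maximality part, I fix $i$ and let $f\colon A\to A^\delta$ be any map in the relevant class with $f\leq\Diamond$ pointwise, and consider the relation $\Phi_f:=\{(a,b)\in A\times A\mid f(a)\leq b\}$. The goal is $\prec_i\subseteq\Phi_f$; since $\prec_i$ is by definition the \emph{smallest} extension of $\prec$ closed under the rules in its row of the table, it suffices to check $\prec\subseteq\Phi_f$ and that $\Phi_f$ is closed under those rules. The base case is Lemma \ref{lem: diamond-output equivalence}(i): from $a\prec b$ we get $\Diamond a\leq b$, hence $f(a)\leq\Diamond a\leq b$. Closure under $(\top)$ is trivial; under $\mathrm{(SI)}$ it is the monotonicity of $f$; under $\mathrm{(WO)}$ the transitivity of $\leq$; under $\mathrm{(AND)}$ the meet property; under $\mathrm{(OR)}$ (for $i\in\{2,4\}$) the regularity of $f$; and under $\mathrm{(CT)}$ (for $i\in\{3,4\}$): assuming $f(a)\leq b$ and $f(a\wedge b)\leq c$, we have $a\wedge f(a)\leq a\wedge b$ in $A^\delta$, so, using that the $\sigma$-extension $f^\sigma$ is monotone and restricts to $f$ on $A$ and that $a\wedge b\in A$, we obtain $f^\sigma(a\wedge f(a))\leq f^\sigma(a\wedge b)=f(a\wedge b)\leq c$; the inequality $f^\sigma a\leq f^\sigma(a\wedge f^\sigma a)$ defining the class then gives $f(a)=f^\sigma a\leq f^\sigma(a\wedge f(a))\leq c$. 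Once $\prec_i\subseteq\Phi_f$ is established, $f(a)\leq b$ for every $b\in\prec_i[a]$, hence $f(a)\leq\bigwedge\prec_i[a]=\Diamond_i a$; so $\Diamond_i$ dominates $f$, and together with the previous paragraph this makes $\Diamond_i$ the extremal map of its class relative to $\Diamond$. The $\blacksquare_i$ clauses follow by the order-dual argument (with the roles of $\bigwedge$ and $\bigvee$, of $\leq$ and $\geq$, and of $\wedge$ and $\vee$ interchanged, and the extremality taken in the opposite direction), via the $\blacksquare$-parts of Lemmas \ref{lem: diamond-output equivalence} and \ref{lem: rleation to axioms} and the order-dual of Proposition \ref{prop:characteriz}(iv)(a).

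I expect the $\mathrm{(CT)}$-closure step to be the main obstacle: it is the only point at which the defining inequality of the class is non-monotone and mentions the term $a\wedge\Diamond_i a$, which lies in $A^\delta$ and not in $A$, so the argument has to be routed through the $\sigma$-extension; the care needed is precisely to verify that $f^\sigma$ is monotone, agrees with $f$ on $A$, and that $a\wedge f(a)$ can be bounded above by the genuine element $a\wedge b\in A$, so that $f^\sigma$ can be compared with $f$. A secondary, bookkeeping point — handled through lattice-basedness and the Section \ref{sec: proto and slanted} lemma linking $\mathrm{(AND)}$ to $\mathrm{(DD)}$ and $\mathrm{(OR)}$ to $\mathrm{(UD)}$ — is to ensure that every $\mathbb{S}_i$ is directed enough for $\mathbb{S}_i^\ast=(A,\Diamond_i,\blacksquare_i)$ to be a genuine slanted algebra of the claimed type, so that Lemma \ref{lem: rleation to axioms}, Corollary \ref{cor: charact monotone reg norm} and Proposition \ref{prop:characteriz} actually apply.
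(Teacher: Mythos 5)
Your proof is correct, and it rests on the same underlying correspondence as the paper's: maps $f\colon A\to A^\delta$ with closed values and $f\leq \Diamond$ induce relations $a\prec_f b$ iff $f(a)\leq b$ which extend $\prec$, and the minimality of $\prec_i$ is what forces the extremality of $\Diamond_i$. The difference is in the logical organization and in how much is actually verified. The paper argues by contradiction: it assumes a counterexample $f$ in the class with $\Diamond_i a\leq f(a)\leq\Diamond a$ everywhere and $\Diamond_i b<f(b)$ somewhere, uses denseness to get ${\prec}\subseteq{\prec_f}\subset{\prec_i}$, and then simply asserts that $\prec_f$ enjoys the required closure properties, contradicting the minimality of $\prec_i$. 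You instead prove directly that \emph{every} $f$ in the class with $f\leq\Diamond$ satisfies $f\leq\Diamond_i$, by checking explicitly that $\Phi_f={\prec_f}$ contains $\prec$ and is closed under the generating rules of $\prec_i$. This buys two things. First, it supplies exactly the verification the paper elides: the rule-closure of $\prec_f$, and in particular the $(\mathrm{CT})$ case, where the defining inequality involves $a\wedge f(a)$, which lives in $A^\delta$ rather than $A$, so the comparison must be routed through the $\sigma$-extension precisely as you do (monotonicity of $f^\sigma$ and its agreement with $f$ on $A$ are what legitimize $f(a)\leq f^\sigma(a\wedge f(a))\leq f(a\wedge b)\leq c$). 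Second, your direct formulation needs neither the denseness step nor the tacit assumption that a failing $f$ can be taken pointwise above $\Diamond_i$, so it establishes the maximality claim in its strongest reading. Finally, your reading of the $\blacksquare_i$ clauses as the order-dual extremality (consistent with $\blacksquare\leq\blacksquare_i$) is the intended one, and your caveat about lattice-basedness and the directedness of the $\mathbb{S}_i$ (via the lemma deriving (DD) from (AND) and (UD) from (OR)) matches the level of care the paper itself exercises on that point.
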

\begin{proof}
 By Lemma \ref{lem: axioms to relation} and Proposition \ref{prop:characteriz}, the properties stated in each item of the statement hold for $\Diamond_i$ and $\blacksquare_i$.  To complete the proof, we need to argue for $\Diamond_i$ being the largest such map (the proof for $\blacksquare_i$ is similar).  By Lemma \ref{lem: diamond-output equivalence} (ii), $a\prec_i b$ iff $\Diamond_i a\leq b$  for  all $a, b\in A$ and $1\leq i\leq 4$. Any $f: A\to A^\delta$  s.t.~$f(a)\in K(A^\delta)$ for every $a\in A$ induces a proto-subordination relation  $\prec_f\subseteq A\times A$ defined as $a\prec_f b$ iff $f(a)\leq b$. Clearly, if $f(a)\leq f' (a)$ for every $a\in A$, then ${\prec}_{f'}\subseteq {\prec_f}$. Moreover, if $f(a)< f' (a)$, then, by denseness, $f(a)\leq b$ for some $b\in A$ s.t.~$f'(a)\nleq b$, hence ${\prec}_{f'}\subset {\prec_f}$. 
 
 If $\Diamond_i$ is not the largest map endowed with the properties mentioned in the statement and dominated by $\Diamond$, then  a map $f$ exists which is endowed with these properties such that  $\Diamond_i a \leq f(a) \leq \Diamond a$ for all $a \in A$, and  $\Diamond_i b < f(b)$ for some $b \in A$. Then, by the argument in the previous paragraph,  ${\prec} = {\prec_\Diamond} \subseteq {\prec_f} \subset {\prec}_{\Diamond_i}=\prec_i$. As $f$ is endowed with the the properties mentioned in the statement,  $\prec_f$ is an extension of $\prec$ which enjoys the required properties, and is strictly contained in $\prec_i$. Hence, $\prec_i$ is not the smallest such extension. 
\end{proof}

 As to the second application, in \cite{celani2020subordination}, Celani introduces an  expansion of Priestley's duality for bounded distributive lattices to
 {\em subordination lattices}, i.e.~tuples $\mathbb{S} =(A, \prec)$ such that $A$ is a distributive lattice and ${\prec}\subseteq A\times A$ is a subordination relation.\footnote{In the terminology of the present paper, subordination lattices are subordination algebras based on  bounded distributive lattices (cf.~Definition \ref{def: subordination algebra}).} The  dual structure of  any subordination lattice $\mathbb{S} =(A, \prec)$ is referred to as the {\em (Priestley) subordination space} of $\mathbb{S}$, and is defined as  $\mathbb{S}_*: = (X(A), R_\prec)$, where $X(A)$ is (the Priestley space dual to $A$, based on) the set of prime filters of $A$,   and $R_\prec \subseteq X(A) \times X(A)$ is defined as follows: for all prime filters $P, Q$ of $A$, 
\[
(P,Q) \in R_\prec \quad \text{ iff }\quad {\prec}[P]: =\{x\in A\mid \exists a(a\in P\ \&\ a\prec x)\} \subseteq Q.
\]

Up to isomorphism, we can equivalently define the subordination space of $\mathbb{S}$ as follows:
\begin{definition}
 \label{def: subordination space in adelta}
 The  {\em subordination space} associated with a subordination lattice $\mathbb{S} =(A, \prec)$ is $\mathbb{S}_*: = (\jty(A^\delta), R_\prec)$, where $\jty(A^\delta)$ is the set of the completely join-irreducible elements of $A^\delta$, and $R_{\prec}\subseteq \jty(A^\delta)\times \jty(A^\delta)$ such that $(j, i)\in  R_{\prec}$ iff $i\leq \Diamond j$.
\end{definition}
\begin{lemma}
For any subordination lattice $\mathbb{S} =(A, \prec)$, the subordination spaces $\mathbb{S}_*$ given according to the two definitions   above are isomorphic.
\end{lemma}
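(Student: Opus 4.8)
The plan is to reduce the statement to the standard order-isomorphism (indeed Priestley homeomorphism) between the Priestley dual $X(A)$ of the bounded distributive lattice $A$ and the poset $\jty(A^\delta)$ of completely join-irreducible elements of its canonical extension, and then to check that this bijection transports the relation $R_\prec$ of the first presentation onto that of the second. Concretely, I would first recall from the theory of canonical extensions of bounded distributive lattices (cf.~\cite{DuGePa05,Gehrke2004BoundedDL}) that the assignments $P\mapsto j_P:=\bigwedge P$ and $j\mapsto P_j:=\{a\in A\mid j\leq a\}$ are mutually inverse bijections between the set of prime filters of $A$ and $\jty(A^\delta)$, that each $j_P$ is a closed element of $A^\delta$, and that, for every $a\in A$, we have $j_P\leq a$ iff $a\in P$ (the nontrivial implication using the compactness of $A$ in $A^\delta$ together with the fact that the filter $P$ is down-directed). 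Since this identification is moreover a homeomorphism with respect to the Priestley topologies, the only thing left to verify is that it intertwines the two relations.

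For the relational part, I would fix prime filters $P,Q$ of $A$ and first compute, using that $j_P\in K(A^\delta)$ and that $\Diamond$ on $A^\delta$ is read as its $\sigma$-extension, that
\[
\Diamond j_P \;=\; \bigwedge\{\Diamond a\mid a\in A\ \text{and}\ j_P\leq a\} \;=\; \bigwedge\{\Diamond a\mid a\in P\} \;=\; \bigwedge\bigcup_{a\in P}{\prec}[a] \;=\; \bigwedge {\prec}[P],
\]
where the second equality uses $j_P\leq a\Leftrightarrow a\in P$ and the third uses $\Diamond a=\bigwedge{\prec}[a]$. Now, if $(P,Q)\in R_\prec$ in the sense of the first definition, i.e.~${\prec}[P]\subseteq Q$, then $j_Q=\bigwedge Q\leq\bigwedge{\prec}[P]=\Diamond j_P$, so $(j_P,j_Q)\in R_\prec$ in the sense of Definition~\ref{def: subordination space in adelta}. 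Conversely, if $j_Q=\bigwedge Q\leq\Diamond j_P=\bigwedge{\prec}[P]$, then for every $x\in{\prec}[P]$ we have $\bigwedge Q\leq x$, whence by compactness (using that $Q$ is down-directed) $b\leq x$ for some $b\in Q$, so $x\in Q$; thus ${\prec}[P]\subseteq Q$. Combining this equivalence with the bijection recalled above yields the desired isomorphism of subordination spaces.

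I expect the main obstacle to be bookkeeping rather than mathematical depth: one must be careful about the direction of the order-isomorphism $X(A)\cong\jty(A^\delta)$ (prime filters ordered by inclusion versus $\jty(A^\delta)$ ordered by the restriction of $\leq$), about the fact that the $\Diamond$ occurring in Definition~\ref{def: subordination space in adelta} is to be read as the $\sigma$-extension of the slanted operation of $\mathbb{S}^\ast$ (which is available because a subordination lattice satisfies (AND) and (OR), hence also (DD) and (UD), so that $\mathbb{S}^\ast$ and its canonical extension are defined), and about verifying that $\bigwedge P$ is genuinely completely join-irreducible in $A^\delta$ (a standard fact for distributive-lattice canonical extensions). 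Once these conventions are pinned down, each of the two inclusions in the relational equivalence follows from a single application of compactness, so no serious computation is required.
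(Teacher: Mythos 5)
Your proposal is correct and follows essentially the same route as the paper: identify prime filters $P$ with the completely join-irreducible (equivalently, join-prime) elements $j_P=\bigwedge P$ of $A^\delta$ via the standard mutually inverse maps, and then verify by compactness that ${\prec}[P]\subseteq Q$ iff $\bigwedge Q\leq \bigwedge{\prec}[P]=\Diamond j_P$, which is exactly the paper's argument (your explicit computation of $\Diamond^\sigma j_P=\bigwedge{\prec}[P]$ and the remarks on (DD)/(UD) merely spell out steps the paper leaves implicit).
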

\begin{proof}
 As is well known, in the canonical extension $A^\delta$ of any  distributive lattice $A$, the set $\jty(A^\delta)$ of the completely join-irreducible elements of $A^\delta$  coincides with the set of its completely join-prime elements, which are in dual order-isomorphism with the prime filters of $A$. Specifically, if $P\subseteq A$ is a prime filter, then $j_P: = \bigwedge P\in K(A^\delta)$ is a completely join-prime element of $A^\delta$; conversely, if $j$ is a completely join-prime element of $A^\delta$, then  $P_j: = \{a\in A\mid j\leq a\}$  is a prime filter of $A$. Clearly, $j = \bigwedge P_j = j_{P_j}$ for any $j\in \jty(A^\delta)$; moreover, it is easy to show, by applying compactness, that $P_{j_{P}} = \{a\in A\mid \bigwedge P\leq a\} = P$ for any prime filter $P$ of $A$.
 
 To complete the proof and show that the two relations $R_{\prec}$ can be identified modulo the identifications above, it is enough to show that  ${\prec}[P]\subseteq Q$ iff $\bigwedge Q\leq \bigwedge {\prec}[P]$ for all prime filters $P$ and $Q$ of $A$. Clearly, ${\prec}[P]\subseteq Q$ implies  $\bigwedge Q\leq \bigwedge {\prec}[P]$. Conversely, if $b\in {\prec} [P]$, then  $\bigwedge Q\leq \bigwedge {\prec}[P]\leq b$, hence, by compactness and $Q$ being an up-set, $b\in Q$, as required.
\end{proof}

In \cite{celani2020subordination}, some properties of subordination lattices are dually characterized in terms  properties of their associated subordination spaces, including those listed in the following proposition, which can be obtained as consequences of the dual characterizations in  Proposition \ref{prop:characteriz}, slanted canonicity \cite{de2021slanted}, and correspondence theory for distributive  modal logic \cite{conradie2012algorithmic}.
\begin{proposition}
\emph{(cf.~\cite{celani2020subordination}, Theorem 5.7)}
\label{prop: celani}
For any subordination lattice $\mathbb{S}$,
\begin{enumerate}[label=(\roman*)]
    \item $\mathbb{S}\models {\prec} \subseteq {\leq}\quad$ iff $\quad R_{\prec}$ is reflexive;
    
    \item $\mathbb{S}\models \mathrm{(D)}\quad $ iff $\quad R_{\prec}$ is transitive, i.e.~$R_{\prec}\circ R_{\prec}\subseteq R_{\prec}$; \item $\mathbb{S}\models \mathrm{(T)}\quad $ iff $\quad R_{\prec}$ is dense, i.e.~$R_{\prec}\subseteq R_{\prec}\circ R_{\prec}$;
    \item  $\mathbb{S}\models (a=\bot) \vee (\blacksquare a \neq \bot)$ iff $R_\prec$ is proper. 
\end{enumerate}
\end{proposition}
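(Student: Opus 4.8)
Here is how I would approach the proof.

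\smallskip
The plan is to obtain each item by the same three-step reduction. First, I would use the characterizations of the previous section --- Lemma \ref{lem: rleation to axioms}, Corollary \ref{cor: charact monotone reg norm} and especially Proposition \ref{prop:characteriz} --- to rewrite the relational condition on $\prec$ as the validity of a modal (in)equality on the associated slanted algebra $\mathbb{S}^\ast = (A, \Diamond, \blacksquare)$; note that a subordination lattice, being lattice-based and satisfying (SI), (WO), (OR), (AND), ($\bot$), ($\top$), also satisfies (DD) and (UD), so by Corollary \ref{cor: charact monotone reg norm}(3) $\mathbb{S}^\ast$ is a normal monotone slanted algebra and the relevant clauses of Proposition \ref{prop:characteriz} apply. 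Second, I would invoke slanted canonicity \cite{de2021slanted}: each (in)equality involved is Sahlqvist (analytic inductive), hence $\mathbb{S}^\ast \models \phi\leq\psi$ iff $(\mathbb{S}^\ast)^\delta \models \phi\leq\psi$, where $(\mathbb{S}^\ast)^\delta = (A^\delta, \Diamond^\sigma, \blacksquare^\pi)$ is a \emph{perfect} distributive modal algebra. Third, since $(\mathbb{S}^\ast)^\delta$ is (isomorphic to) the complex algebra of the Kripke frame $\mathbb{S}_\ast = (\jty(A^\delta), R_\prec)$ --- the relation $R_\prec$ being exactly $\{(j,i)\mid i\leq \Diamond^\sigma j\}$ by Definition \ref{def: subordination space in adelta} --- I would apply Sahlqvist correspondence for distributive modal logic \cite{conradie2012algorithmic} to turn validity of $\phi\leq\psi$ on $(\mathbb{S}^\ast)^\delta$ into its first-order correspondent on $\mathbb{S}_\ast$.

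\smallskip
Concretely, for (i) Proposition \ref{prop:characteriz}(i) gives $\mathbb{S}\models{\prec}\subseteq{\leq}$ iff $\mathbb{S}^\ast\models a\leq\Diamond a$; by canonicity this is $(\mathbb{S}^\ast)^\delta\models a\leq\Diamond^\sigma a$, and $p\leq\Diamond p$ corresponds to reflexivity $\forall x\,(x\,R_\prec\,x)$ (instantiating at $j\in\jty(A^\delta)$, $j\leq\Diamond^\sigma j$ is $(j,j)\in R_\prec$ by Definition \ref{def: subordination space in adelta}, and conversely reflexivity yields $u=\bigvee\{j\in\jty(A^\delta)\mid j\leq u\}\leq\bigvee\{\Diamond^\sigma j\mid j\leq u\}=\Diamond^\sigma u$). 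For (ii), since a subordination lattice satisfies (WO)$+$(DD)$+$(SI), Proposition \ref{prop:characteriz}(iii)(b) gives $\mathbb{S}\models\mathrm{(D)}$ iff $\mathbb{S}^\ast\models\Diamond\Diamond a\leq\Diamond a$, whose canonical correspondent is $R_\prec\circ R_\prec\subseteq R_\prec$, i.e.\ transitivity. Symmetrically, for (iii), Proposition \ref{prop:characteriz}(iii)(a) gives $\mathbb{S}\models\mathrm{(T)}$ iff $\mathbb{S}^\ast\models\Diamond a\leq\Diamond\Diamond a$, whose correspondent is $R_\prec\subseteq R_\prec\circ R_\prec$, i.e.\ density. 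In each of these cases the only non-routine points are checking that the cited clause of Proposition \ref{prop:characteriz} applies (done above) and recalling the standard Sahlqvist correspondents of $p\leq\Diamond p$, $\Diamond\Diamond p\leq\Diamond p$, $\Diamond p\leq\Diamond\Diamond p$.

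\smallskip
Item (iv) is where the real work lies, and I expect it to be the main obstacle. Here there is no $\Diamond$-companion and no clause of Proposition \ref{prop:characteriz} to lean on: the condition already lives on the slanted-algebra side, since $\blacksquare a = \bigvee{\prec^{-1}}[a]$ by definition, so ``$\mathbb{S}\models (a=\bot)\vee(\blacksquare a\neq\bot)$'' is to be read as $\mathbb{S}^\ast\models(a=\bot)\vee(\blacksquare a\neq\bot)$. I would first rephrase this as the quasi-inequality $\blacksquare a\leq\bot \;\Rightarrow\; a\leq\bot$ and observe, using monotonicity of $\blacksquare^\pi$ and join-density of $\jty(A^\delta)$, that on $(\mathbb{S}^\ast)^\delta$ it is equivalent to $\bot<\blacksquare^\pi j$ for every $j\in\jty(A^\delta)$. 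This is a (restricted) inductive quasi-inequality, still within the scope of slanted canonicity, so it transfers to $(\mathbb{S}^\ast)^\delta$; the correspondence algorithm of \cite{conradie2012algorithmic} then returns an elementary condition on $R_\prec$, and the last step is to unwind Celani's definition and verify that this condition is precisely his notion of $R_\prec$ being \emph{proper}. The obstacles specific to (iv) are thus threefold: $\blacksquare$ occurs alone, so the dictionary furnished by Lemma \ref{lem: diamond-output equivalence} and Proposition \ref{prop:characteriz} is unavailable and the condition must be handled directly; one must use the quasi-inequality versions of slanted canonicity and of distributive-modal correspondence rather than the plain ones; and the correspondent produced by the algorithm has to be matched by hand with Celani's formulation of properness. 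By contrast, items (i)--(iii) are, once the template is in place, essentially bookkeeping.
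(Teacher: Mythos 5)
For items (i)--(iii) your route is exactly the paper's: Proposition \ref{prop:characteriz} to pass to $\mathbb{S}^\ast$, slanted canonicity of the analytic inductive inequalities $a\leq\Diamond a$, $\Diamond\Diamond a\leq\Diamond a$, $\Diamond a\leq\Diamond\Diamond a$ via \cite{de2021slanted}, correspondence for distributive modal logic via \cite{conradie2012algorithmic}, and then Definition \ref{def: subordination space in adelta} to read off reflexivity, transitivity and density of $R_\prec$; this part is fine.

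Item (iv), however, is where your proposal has a genuine gap rather than just being ``the main obstacle''. Your plan rests on two steps that are asserted but not available as cited: first, you claim that the quasi-inequality $\blacksquare a\leq\bot\Rightarrow a\leq\bot$ is ``still within the scope of slanted canonicity'', but \cite[Theorem 4.1]{de2021slanted} (and the correspondence results of \cite{conradie2012algorithmic} as used in this paper) concern analytic inductive \emph{inequalities}; a condition of the form $(a=\bot)\vee(\blacksquare a\neq\bot)$ is not of that shape, and the transfer you need --- from ``$\blacksquare a\neq\bot$ for all $\bot\neq a\in A$'' to ``$\bot<\blacksquare^\pi j$ for all $j\in\jty(A^\delta)$'' --- is itself a canonicity-type statement that requires its own argument (note $\blacksquare^\pi j=\bigwedge\{\blacksquare^\pi o\mid j\leq o\}$ for closed $j$, so a lower bound $\bot<\blacksquare^\pi j$ does not follow formally from the hypothesis on elements of $A$). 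Second, even granting the transfer, ``the correspondence algorithm returns an elementary condition which one matches by hand with Celani's properness'' is precisely the content of item (iv), and you leave it unexecuted. The paper avoids this machinery altogether: it unfolds Celani's definition of properness directly on the Priestley side (for every proper closed down-set $Y$ of $X(A)$ and every $Q\in Y$ there is a prime filter $P\neq A$ with ${\prec}[P]\not\subseteq Q$) and argues by cases on ${\prec}^{-1}[a]$: if some $a\neq\bot$ has ${\prec}^{-1}[a]\subseteq\{\bot\}$, taking $Y$ to be the set of prime filters not containing $a$ witnesses failure of properness; conversely, if ${\prec}^{-1}[a]\supsetneq\{\bot\}$ for every $a\neq\bot$, then for any proper prime filter $Q$ one picks $a_Q\notin Q$, $a_Q \neq \bot$, and the proper filter generated by ${\prec}^{-1}[a_Q]$ yields a $P$ with ${\prec}[P]\not\subseteq Q$. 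Since $\blacksquare a=\bigvee{\prec}^{-1}[a]$, this is exactly the stated condition $(a=\bot)\vee(\blacksquare a\neq\bot)$. To repair your proof of (iv) you would either have to supply the canonicity and correspondence arguments for this non-inequational condition yourself, or switch to a direct prime-filter argument of this kind.
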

\begin{proof}
 (i) By Proposition \ref{prop:characteriz} (i), $\mathbb{S}\models {\prec} \subseteq {\leq}$ iff 
 $\mathbb{S}^{*}\models a\leq \Diamond a$; the inequality $a\leq \Diamond a$ is analytic inductive (cf.~\cite[Definition 55]{greco2018unified}), and hence {\em slanted} canonical by \cite[Theorem 4.1]{de2021slanted}. Hence, from $\mathbb{S}^{*}\models a\leq \Diamond a$ it follows that $(\mathbb{S}^{*})^\delta\models a\leq \Diamond a$, where $(\mathbb{S}^{*})^\delta$  is a {\em standard} (perfect) distributive modal algebra. By algorithmic correspondence theory for distributive modal logic (cf.~\cite[Theorems 8.1 and 9.8]{conradie2012algorithmic}), $(\mathbb{S}^{*})^\delta\models a\leq \Diamond a$ iff $(\mathbb{S}^{*})^\delta\models \forall \nomj (\nomj \leq \Diamond \nomj)$ where $\nomj$ ranges in the set $\jty((\mathbb{S}^{*})^\delta)$. By Definition \ref{def: subordination space in adelta}, this is equivalent to $R_{\prec}$ being reflexive.

(ii) By Proposition  \ref{prop:characteriz}  (iii)(b), $\mathbb{S}\models \mathrm{(D)}$ iff  $\mathbb{S}^{*}\models \Diamond \Diamond a\leq \Diamond a$; the inequality $\Diamond \Diamond a\leq \Diamond a$ is analytic inductive (cf.~\cite[Definition 55]{greco2018unified}), and hence {\em slanted} canonical by \cite[Theorem 4.1]{de2021slanted}. Hence, from $\mathbb{S}^{*}\models \Diamond \Diamond a\leq \Diamond a$ it follows that $(\mathbb{S}^{*})^\delta\models \Diamond \Diamond a\leq \Diamond a$, where $(\mathbb{S}^{*})^\delta$ is a {\em standard} (perfect) distributive modal algebra. By algorithmic correspondence theory for distributive modal logic (cf.~\cite[Theorems 8.1 and 9.8]{conradie2012algorithmic}), $(\mathbb{S}^{*})^\delta\models \Diamond \Diamond a\leq \Diamond a$ iff $(\mathbb{S}^{*})^\delta\models \forall \nomj (\Diamond \Diamond \nomj \leq \Diamond \nomj)$ where $\nomj$ ranges in  $\jty((\mathbb{S}^{*})^\delta)$. The following chain of equivalences holds in any (perfect) algebra $A^\delta$:

\begin{center}
    \begin{tabular}{clll}
    $\forall \nomj (\Diamond\Diamond \nomj\leq \Diamond\nomj)$ & iff & $\forall \nomj\forall \nomk (\nomk\leq \Diamond\Diamond \nomj \Rightarrow \nomk\leq \Diamond\nomj)$ & ($\ast\ast$)\\
    & iff & $\forall \nomj\forall \nomk (\exists \nomi (\nomi\leq \Diamond \nomj \ \& \ \nomk\leq \Diamond\nomi) \Rightarrow \nomk\leq \Diamond\nomj)$ & ($\ast$)\\
    & iff & $\forall \nomj\forall \nomk\forall  \nomi ( (\nomi\leq \Diamond \nomj \ \& \ \nomk\leq \Diamond\nomi) \Rightarrow \nomk\leq \Diamond\nomj)$\\
    \end{tabular}
\end{center}
The equivalence marked with ($\ast\ast$) is due to the fact that canonical extensions of distributive lattices are completely join-generated by the completely join-prime elements.
Let us show the equivalence marked with ($\ast$): as is well known,  every  perfect distributive lattice is  join-generated by its completely join-irreducible elements; hence, $\Diamond j = \bigvee\{i\in \jty(A^\delta)\mid i\leq \Diamond j\}$, and since $\Diamond$ is completely join-preserving, $\Diamond \Diamond j = \bigvee\{\Diamond i\mid i\in \jty(A^\delta)\ \&\ i\leq \Diamond j\}$. Hence, since $k\in \jty(A^\delta)$ is completely join-prime, $k\leq \Diamond \Diamond j$ iff $k\leq \Diamond i$ for some $i\in \jty(A^\delta)$ s.t.~$i\leq \Diamond j$.

By Definition \ref{def: subordination space in adelta}, the last line of the chain of equivalences above is equivalent to $R_{\prec}$ being transitive.

 The proof of (iii) is argued in a similar way using item(iii) (a) of Proposition \ref{prop:characteriz}, and noticing that the modal inequality characterizing conditions (T) is analytic inductive.
 
 (iv)
 
 \begin{tabular}{cll}
    & $R_\prec$ is proper& \\
    iff&  for any proper closed down-set $Y$ of $X(A)$, $\exists P \in X(A)$ such that & \\
    &$ P \neq A\ \&\ P \not \in R_\prec^{-1}[Y]$& \\ 
    iff & for any proper closed down-set $Y$ of $X(A)$, and any $Q \in Y$,  &\\
    & $\exists P \in X(A)$ such that  $ P \neq A\ \&\ \prec{[P]} \not\subseteq Q$& (Definition of $R_\prec$)\\
 \end{tabular}

 Suppose there exists $a \neq \bot$ in $A$ such that  $\prec^{-1}{[a]} \subseteq \{\bot\}$. Then for any $P \in X(A)$, $P \neq A$, we have $\prec [P] \subseteq Q$ for some prime filter $Q$ not containing $a$. Let $Y$ be the set of all  prime filters not containing $a$. Then the above condition does not hold for any $P \in X(A)$.  Therefore, $R_\prec$ is not proper. 
 
 Suppose $\{\bot\}  \subseteq \prec^{-1}{[a]}$ and $\{\bot\}  \neq \prec^{-1}{[a]}$  for all $a \neq \bot \in A$. Let $Q$ be any prime filter not equal to $A$. Then there exists $a_Q \in A$ such that $a_Q \neq \bot$ and $a_Q \not\in Q$. Let $P$ be  proper prime filter generated by $\prec^{-1}{[a_Q]}$. Then, we have $\prec [P] \not\subseteq Q$. Therefore, $R_\prec$ is proper.
 
 Therefore, $R_\prec$ is proper iff  for all $a \neq \bot$ in $A$, $ \{\bot\}  \subseteq \prec^{-1}{[a]} $ and $\{\bot\}  \neq \prec^{-1}{[a]}$ iff $(a =\bot) \vee (\blacksquare a \neq \bot)$.
\end{proof} 
Likewise, items (iv) and (vii) of Proposition \ref{prop:characteriz} can be used to extend Celani's  results and provide  relational characterizations, on subordination spaces, of conditions (CT), (S9), (SL1), (SL2), noticing that the modal inequalities corresponding to those conditions are all analytic inductive.

\begin{proposition}
For any subordination lattice $\mathbb{S}$,
\begin{enumerate}[label=(\roman*)]
    \item $\mathbb{S}\models \mathrm{(CT)}\quad $ iff $\ j R_\prec i$ implies $j R_\prec k, k R_\prec i$, for some $k \leq j$;
    \item $\mathbb{S}\models \mathrm{(S9)} \quad$ iff $\ i_3 R_\prec i_1, i_3 R_\prec i_2 \Leftrightarrow (\exists j \leq i_1)\ j R_\prec i_2, i_3 R_\prec j$;
    \item $\mathbb{S}\models \mathrm{(SL1)}\quad $ iff $\ i_4 R_\prec i_1, i_4 R_\prec i_2, i_3 R_\prec i_4 \Rightarrow (\exists j \leq i_1 \land i_2)\ i_3 R_\prec j$;
    \item $\mathbb{S}\models \mathrm{(SL2)}\quad $ iff $\ i_1 R_\prec i_4, i_2 R_\prec i_4, i_4 R_\prec i_3 \Rightarrow (\exists j \leq i_1 \land i_2)\ j R_\prec i_3$.
\end{enumerate}
\end{proposition}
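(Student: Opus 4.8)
The plan is to establish all four items by the same three-step argument used for Proposition~\ref{prop: celani}. Since $\mathbb{S}=(A,\prec)$ is a subordination lattice, $A$ is a bounded distributive lattice and $\prec$ satisfies $\mathrm{(SI)}$, $\mathrm{(WO)}$, $\mathrm{(AND)}$, $\mathrm{(OR)}$, $(\bot)$, $(\top)$, hence also $\mathrm{(DD)}$ and $\mathrm{(UD)}$ (by the lemmas of Section~\ref{sec: proto and slanted}), so all hypotheses of the items of Proposition~\ref{prop:characteriz} used below are met. Step~1 is to rewrite each condition as the validity on the slanted algebra $\mathbb{S}^*$ of a modal inequality: Proposition~\ref{prop:characteriz}(iv)(a) gives $\mathbb{S}\models\mathrm{(CT)}$ iff $\mathbb{S}^*\models\Diamond a\leq\Diamond(a\wedge\Diamond a)$; item~(iv)(b) gives $\mathbb{S}\models\mathrm{(SL2)}$ iff $\mathbb{S}^*\models\Diamond(\Diamond a\wedge\Diamond b)\leq\Diamond(a\wedge b)$; item~(vii)(c) gives $\mathbb{S}\models\mathrm{(SL1)}$ iff $\mathbb{S}^*\models\blacksquare(a\vee b)\leq\blacksquare(\blacksquare a\vee\blacksquare b)$; and, $\mathrm{(S9)}$ being the conjunction of $\mathrm{(S9\Rightarrow)}$ and $\mathrm{(S9\Leftarrow)}$, items~(vii)(a),(b) give $\mathbb{S}\models\mathrm{(S9)}$ iff $\mathbb{S}^*\models\blacksquare(a\vee\blacksquare b)=\blacksquare a\vee\blacksquare b$. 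Step~2: each of these (in)equalities is analytic inductive (cf.~\cite[Definition~55]{greco2018unified}), hence slanted canonical by \cite[Theorem~4.1]{de2021slanted}, so its validity on $\mathbb{S}^*$ is equivalent to its validity on $(\mathbb{S}^*)^\delta$, a standard (perfect) distributive modal algebra.

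Step~3 is to compute the first-order correspondent of each inequality over $(\mathbb{S}^*)^\delta$ by algorithmic correspondence for distributive modal logic (\cite[Theorems~8.1 and~9.8]{conradie2012algorithmic}) and unravel it through Definition~\ref{def: subordination space in adelta}. Here one uses that $(\mathbb{S}^*)^\delta$ is completely join-generated by its completely join-irreducible ($=$ completely join-prime) elements, that $\Diamond$ is completely join-preserving while $\blacksquare$ is completely meet-preserving, and that $\Diamond$ and $\blacksquare$ remain adjoint on the canonical extension (the tense property $\Diamond a\leq b\Leftrightarrow a\leq\blacksquare b$ of Lemma~\ref{lem: diamond-output equivalence} lifts), so that the $\blacksquare$-inequalities too translate into conditions purely about $R_\prec$. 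For $\mathrm{(CT)}$, reducing to the minimal valuation $a:=j\in\jty(A^\delta)$ and rewriting $j\wedge\Diamond j=\bigvee\{k\in\jty(A^\delta)\mid k\leq j,\ k\leq\Diamond j\}$ by distributivity, the inequality becomes $\forall j\,\forall i\,(i\leq\Diamond j\Rightarrow\exists k\,(k\leq j\ \&\ k\leq\Diamond j\ \&\ i\leq\Diamond k))$, i.e.~item~(i). For $\mathrm{(SL2)}$, reducing to $a:=i_1$, $b:=i_2$ and rewriting $\Diamond i_1\wedge\Diamond i_2=\bigvee\{i_4\in\jty(A^\delta)\mid i_4\leq\Diamond i_1,\ i_4\leq\Diamond i_2\}$, one obtains $i_1R_\prec i_4\ \&\ i_2R_\prec i_4\ \&\ i_4R_\prec i_3\Rightarrow\exists j\,(j\leq i_1\wedge i_2\ \&\ jR_\prec i_3)$, i.e.~item~(iv). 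The $\blacksquare$-inequalities $\mathrm{(S9)}$ and $\mathrm{(SL1)}$ are handled by the order-dual computation, $\blacksquare$ being interpreted on $(\mathbb{S}^*)^\delta$ via $R_\prec$; this yields items~(iii) and~(ii), the biconditional in~(ii) being exactly the conjunction of the correspondents of the two inclusions $\mathrm{(S9\Rightarrow)}$ and $\mathrm{(S9\Leftarrow)}$.

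I expect the main obstacle to be the bookkeeping in these correspondence computations: the conjoined occurrences $j\wedge\Diamond j$ in $\mathrm{(CT)}$ and $\Diamond a\wedge\Diamond b$ in $\mathrm{(SL2)}$ require combining complete distributivity with complete join-preservation of $\Diamond$ in order to push the inequality down to join-irreducibles and to place the existential witness together with its order side-condition ($k\leq j$, resp.~$j\leq i_1\wedge i_2$); and for $\mathrm{(S9)}$ and $\mathrm{(SL1)}$ one must dualize carefully so that the box-operator translates back into $R_\prec$ rather than an auxiliary relation. These are, however, exactly the moves already carried out in the proof of Proposition~\ref{prop: celani}(ii),(iv) and in the proof of Proposition~\ref{prop:characteriz}, so no genuinely new idea is required.
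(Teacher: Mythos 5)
Your proposal follows essentially the same route as the paper's proof: reduce each condition to a modal (in)equality on $\mathbb{S}^*$ via Proposition \ref{prop:characteriz}, transfer it to $(\mathbb{S}^*)^\delta$ by analytic inductiveness and slanted canonicity, and then read off the first-order correspondent over completely join-irreducible (resp.\ meet-irreducible) elements through Definition \ref{def: subordination space in adelta}; your explicit computations for (CT) and (SL2) match the paper's, and treating (S9) as the conjunction of the two inclusions is exactly what the paper does. The only cosmetic difference is that you justify translating the $\blacksquare$-conditions into $R_\prec$-conditions via the lifted adjunction between $\Diamond$ and $\blacksquare$, whereas the paper invokes the order-reversing isomorphism $\lambda$ between completely meet- and join-irreducibles with $\blacksquare \cnomm \leq \cnomn$ iff $\lambda(\cnomm)\leq \Diamond\lambda(\cnomn)$ --- the same fact stated differently.
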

\begin{proof}
 (i) By Proposition \ref{prop:characteriz} (iv)(a), $\mathbb{S}\models \mathrm{(CT)}$ iff the slanted algebras $\mathbb{S}^{*}\models \Diamond a\leq \Diamond (a \wedge \Diamond a)$; the inequality $\Diamond a\leq \Diamond (a \wedge \Diamond a)$ is analytic inductive, and hence canonical. Hence, from $\mathbb{S}^{*}\models \Diamond a\leq \Diamond (a \wedge \Diamond a)$ it follows that $(\mathbb{S}^{*})^\delta\models \Diamond a\leq \Diamond (a \wedge \Diamond a)$, which is a {\em standard} (perfect) distributive modal algebra. By algorithmic correspondence theory for distributive modal logic, $(\mathbb{S}^{*})^\delta\models \Diamond a\leq \Diamond (a \wedge \Diamond a)$ iff $(\mathbb{S}^{*})^\delta\models \forall \nomj ( \Diamond \nomj \leq \Diamond (\nomj \wedge \Diamond \nomj))$ where $\nomj$ ranges in the set $\jty((\mathbb{S}^{*})^\delta)$ of the completely join-irreducible elements of $(\mathbb{S}^{*})^\delta$. Therefore,
    \begin{center}
    \begin{tabular}{cll}
    & $ \Diamond \nomj \leq \Diamond (\nomj \wedge \Diamond \nomj) $ & \\
    iff& $ \forall \nomi (\nomi \leq \Diamond \nomj \Rightarrow \nomi \leq \Diamond (\nomj \land \Diamond \nomj)$  &
    \\ 
    iff&$ \forall \nomi \exists \nomk (\nomi \leq \Diamond \nomj \Rightarrow \nomi \leq \Diamond \nomk, \nomk \leq \nomj, \nomk \leq \Diamond \nomj)$  &\\ 
    \end{tabular}
    \end{center}
By Definition \ref{def: subordination space in adelta}, the last line of the chain of equivalences above is equivalent to (i).

(ii) By Proposition \ref{prop:characteriz} (vii)(a), $\mathbb{S}\models \mathrm{(S9)}$ iff the slanted algebras $\mathbb{S}^{*}\models \blacksquare ( a \vee \blacksquare b) = \blacksquare a \vee \blacksquare b$; both the inequalities $\blacksquare ( a \vee \blacksquare b) \leq \blacksquare a \vee \blacksquare b$ and  $\blacksquare ( a \vee \blacksquare b) \geq \blacksquare a \vee \blacksquare b$ are analytic inductive, and hence canonical. Hence, from $\mathbb{S}^{*}\models \blacksquare ( a \vee \blacksquare b) = \blacksquare a \vee \blacksquare b$ it follows that $(\mathbb{S}^{*})^\delta\models \blacksquare ( a \vee \blacksquare b) = \blacksquare a \vee \blacksquare b$, which is a {\em standard} (perfect) distributive modal algebra. By algorithmic correspondence theory for distributive modal logic, $(\mathbb{S}^{*})^\delta\models \blacksquare ( a \vee \blacksquare b) = \blacksquare a \vee \blacksquare b$ iff $(\mathbb{S}^{*})^\delta\models \forall \cnomm \forall \cnomn (\blacksquare ( \cnomm \vee \blacksquare \cnomn) = \blacksquare \cnomm \vee \blacksquare \cnomn)$ where $\cnomm, \cnomn$ ranges in the set $\mty((\mathbb{S}^{*})^\delta)$ of the completely meet-irreducible elements of $(\mathbb{S}^{*})^\delta$. Therefore,
\begin{center}
    \begin{tabular}{cll}
    & $ \blacksquare ( \cnomm \vee \blacksquare \cnomn) \leq \blacksquare \cnomm \vee \blacksquare \cnomn $ & \\
    & $\forall \cnomo (\blacksquare \cnomm \vee \blacksquare \cnomn \leq \cnomo \Rightarrow \blacksquare (\cnomm \vee \blacksquare \cnomn) \leq \cnomo)$ & \\
    & $\forall \cnomo (\blacksquare \cnomm \leq \cnomo, \blacksquare \cnomn \leq \cnomo \Rightarrow \exists \cnomo^\prime (\cnomm \leq \cnomo^\prime, \blacksquare \cnomn \leq \cnomo^\prime, \blacksquare \cnomo^\prime \leq \cnomo) ),$ & \\
\end{tabular}
\end{center}
and for the other direction:

\begin{center}
    \begin{tabular}{cll}
    & $\blacksquare \cnomm \vee \blacksquare \cnomn \leq \blacksquare ( \cnomm \vee \blacksquare \cnomn) $ & \\
    & $\forall \cnomo ( \exists \cnomo^\prime (\cnomm \leq \cnomo^\prime, \blacksquare \cnomn \leq \cnomo^\prime, \blacksquare \cnomo^\prime \leq \cnomo) \Rightarrow \blacksquare \cnomm \leq \cnomo, \blacksquare \cnomn \leq \cnomo).$ & \\
\end{tabular}
\end{center}

By Definition \ref{def: subordination space in adelta}, the last line of the chain of equivalences above is equivalent to (ii), given the order-reversing isomorphism $\lambda$ between completely meet-irreducible and completely join-irreducible elements in the distributive setting (it always holds that $\blacksquare \cnomm \leq \cnomn$ iff $\lambda (\cnomm) \leq \Diamond \lambda (\cnomn)$).

We conclude with the proof of (iii), since (iv) is similar. By Proposition \ref{prop:characteriz} (vii)(c), $\mathbb{S}\models \mathrm{(SL1)}$ iff the slanted algebras $\mathbb{S}^{*}\models \blacksquare ( a \vee b) \leq \blacksquare (\blacksquare a \vee \blacksquare b)$; the inequality $\blacksquare ( a \vee b) \leq \blacksquare (\blacksquare a \vee \blacksquare b)$ is analytic inductive, and hence canonical. Hence, from $\mathbb{S}^{*}\models \blacksquare ( a \vee b) \leq \blacksquare (\blacksquare a \vee \blacksquare b)$ it follows that $(\mathbb{S}^{*})^\delta\models \blacksquare ( a \vee b) \leq \blacksquare (\blacksquare a \vee \blacksquare b)$, which is a {\em standard} (perfect) distributive modal algebra. By algorithmic correspondence theory for distributive modal logic, $(\mathbb{S}^{*})^\delta\models \blacksquare ( a \vee b) \leq \blacksquare (\blacksquare a \vee \blacksquare b)$ iff $(\mathbb{S}^{*})^\delta\models \forall \cnomm \forall \cnomn (\blacksquare ( \cnomm \vee \cnomn) \leq \blacksquare (\blacksquare \cnomm \vee \blacksquare \cnomn))$ where $\cnomm, \cnomn$ ranges in the set $\mty((\mathbb{S}^{*})^\delta)$ of the completely meet-irreducible elements of $(\mathbb{S}^{*})^\delta$. Therefore,
\begin{center}
    \begin{tabular}{cll}
    & $ \blacksquare ( \cnomm \vee  \cnomn) \leq \blacksquare (\blacksquare \cnomm \vee \blacksquare \cnomn) $ & \\
    & $ \forall \cnomo (\blacksquare (\blacksquare \cnomm \vee \blacksquare \cnomn) \leq \cnomo \Rightarrow  \blacksquare ( \cnomm \vee  \cnomn) \leq \cnomo) $ & \\
    & $ \forall \cnomo \forall \cnomm_1 (\blacksquare \cnomm \vee \blacksquare \cnomn \leq \cnomm_1, \blacksquare \cnomm_1 \leq \cnomo \Rightarrow \exists \cnomn_1 ( \cnomm \vee \cnomn \leq \cnomn_1, \blacksquare \cnomn_1 \leq \cnomo) )$ &\\
    & $ \forall \cnomo \forall \cnomm_1 (\blacksquare \cnomm \leq \cnomm_1, \blacksquare \cnomn \leq \cnomm_1, \blacksquare \cnomm_1 \leq \cnomo \Rightarrow \exists \cnomn_1 ( \cnomm \leq \cnomn_1, \cnomn \leq \cnomn_1, \blacksquare \cnomn_1 \leq \cnomo) ).$ &\\
 \end{tabular}
    \end{center}
    
    By Definition \ref{def: subordination space in adelta}, the last line of the chain of equivalences above is equivalent to (iii).
\end{proof} 

\section{Conclusions}
\label{sec:conclusions}
We have established a novel connection between the research fields of subordination algebras and of input/output logics and normative reasoning. The present paper focuses only on conditional obligations; however, similarly to the duality between box and diamond operators in modal logic,  conditional permission (aka negative permission) has been introduced and  analysed by Makinson and van der Torre as the dual concept of conditional obligation 
\cite{Makinson03}. In future work, we will study conditional permission in the environment of pre-contact algebras \cite{dimov2005topological},  algebraic structures defined dually to  subordination algebras. 

We have presented a bi-modal characterization of input/output logic in the context of selfextensional logics, a class of logics defined in terms of minimal properties, which are satisfied both by classical propositional logic and by the best known nonclassical logics. %
The present  approach 
is different from other modal formulations of input/output logic \cite{Makinson00,Parent2021,strasser2016adaptive}, also on a non-classical propositional base, in that the output operators themselves are semantically characterized as (suitable restrictions of) modal operators, and their properties characterized in terms of modal axioms (inequalities).  

The bi-modal formulation can be given  different interpretations. For example,  If  $\prec$ satisfies (SI), (WO), (UD) and (DD), then $a\prec b$ iff $\Diamond a\leq b$ iff $a\leq \blacksquare b$, which are the fundamental relations in tense logic. 
Temporal readings of conditional norms is addressed by Makinson in 
\cite{Makinson98}.

In this paper, we have formulated input/output operations in terms of maxiconsistent formulas rather than maxiconsistent sets. The  object-level input/output operations can be used in AGM theory to capture norm dynamics \cite{stolpe2010norm,boella2016agm}. A similar approach  was investigated as a qualitative theory of dynamic interactive belief revision in AGM theory and epistemic logic \cite{baltag2006conditional,baltag2008qualitative}. As a future work, it would be interesting to extend our logical framework to  incorporate a norm-revision mechanism (object-level input/output operations) within dynamic-deontic logic.

Legal Informatics has recently received a lot of attention from industry and institutions due to the rise of RegTech and FinTech. The input/output logic is expressive enough to support reasoning about constitutive, regulative and defeasible rules; these notions play an important role in the legal domains \cite{boella2004regulative}. For example, reified input/output logic is a
suitable formalism for expressing legal statements like those in the General Data
Protection Regulation, for more details see the DAPRECO knowledge base \cite{robaldo2019formalizing}. There are several active projects for implementing input/output reasoners \cite{J46,farjami2020discursive,steen2021goal,lellmann2021input}. One of the current challenges is scalability of legal (and I/O) reasoners. Subordination algebras and its correspondence theorems in first-order logic can be used to understand algorithmic correspondence of input/output operations in first-order logic for designing scalable I/O reasoners, for example see \cite{bellomarini2020vadalog},  for legal applications.

Finally, we hope that the bridge established here can be used to improve mathematical models and methods such as topological, algebraic and
duality-theoretic techniques in normative reasoning on one hand, and finding conceptual applications for subordination algebra and related literature on the other hand. For instance, finding normative meaning for  the new rules discussed here such as (DD), (UD), (SL1), and (SL2) would be interesting.

\appendix
\subsection{Proof of Proposition \ref{prop:characteriz}}
\label{appendix:proof:prop: charact}
\begin{proof}
 (iv)(a) From  left to right, 
Let $a\in A$. If ${\prec}[a] = \varnothing$, then $\Diamond a = \bigwedge \varnothing = \top = \Diamond (a\wedge \top) =\Diamond (a\wedge \Diamond a)$, as required.  
If ${\prec}[a] \neq \varnothing$, then $a\wedge \Diamond a = \bigwedge \{a\wedge e\mid a\prec e\}\in K(A^\delta)$, since, by 
 (DD), $\{a\wedge e\mid a\prec e\}$ is down-directed. Hence,  
 \begin{center}
     \begin{tabular}{rcll}
        $\Diamond (a\wedge \Diamond a)$ 
        &$=$&$\bigwedge\{\Diamond c\mid a\wedge \Diamond a\leq c\}$& definition of $\Diamond$ on $K(A^\delta)$\\
        &$=$&$\bigwedge\{d\mid\exists c(c\prec d\ \& \  a\wedge \Diamond a\leq c)\}$& definition of $\Diamond$ on $A$
     \end{tabular}
 \end{center}  
%
By 
compactness,
$\bigwedge \{a \wedge e \mid a \prec e\}  = a \wedge \Diamond a \leq c$ 
 is equivalent to $a\wedge e\leq c$ for some $e\in A$ such that $a\prec e$. 
Thus, 
\[\Diamond (a\wedge \Diamond a) =  \bigwedge\{d\mid\exists c(c\prec d\ \& \  a\wedge \Diamond a\leq c)\} = \bigwedge\{d\mid\exists c(c\prec d\ \& \ \exists e( a \prec e \ \&\ a \wedge e \leq c))\}.  \] 
To finish the proof that $\Diamond a  = \bigwedge \{d\mid a \prec d\}\leq \Diamond (a\wedge \Diamond a)$, it is enough to show that if $d\in A$ is such that $\exists c(c\prec d\ \& \ \exists e( a \prec e \ \&\ a \wedge e \leq c))$ then $a\prec d$.
 Since $a \wedge e \leq c$ and $c\prec d$, by (SI), $a \wedge e \prec d$.
Hence, by (CT), from 
$a \prec e$ and $a \wedge e \prec d$ it follows that $a\prec d$, as required.

Conversely, assume that $\Diamond a\leq \Diamond (a\wedge \Diamond a)$ holds for any $a$. By (WO), (DD), and Lemma \ref{lem: diamond-output equivalence} (ii), (CT) can be equivalently rewritten as follows:
\[
\text{if }\Diamond a \leq b 
\quad \text{and} \quad
\Diamond (a \wedge b) \leq c, \text{ then } \Diamond a\leq c. 
\]
Since $\wedge$ is monotone, $\Diamond a \leq b $ implies that
$
a \wedge \Diamond a \leq a \wedge b,
$
which implies, by the monotonicity of $\Diamond$ (which is implied by (SI), cf.~Lemma \ref{lem: rleation to axioms}(i)(a)),
that $
\Diamond (a \wedge \Diamond a )\leq \Diamond(a \wedge b).
$
Hence, combining the latter inequality  with $
\Diamond a\leq \Diamond (a\wedge \Diamond a)$ and $\Diamond (a \wedge b) \leq c$, by the transitivity of $\leq$, we get
$\Diamond a\leq c $, as required. The proof of (iv)(b) is dual to that of (vii)(c), and is omitted.

(v) Let $a, b, c\in A$. If $a\prec b$ and $b\prec c$, then  by (SI), $a\wedge b\leq b\prec c$ implies $a\wedge b\prec c$, which, by (CT), implies $a\prec c$, as required.

(vi)  By Proposition \ref{prop:background can-ext}(ii) and (iii), both extensions $\neg^\sigma$ and $\neg^\pi$ on $A^\delta$ are involutive. Hence, the following chains of equivalences hold under both interpretations of the negation, and thus, abusing notation, we omit the superscript.
\begin{center}
\begin{tabular}{rll}
& $\neg \Diamond a \leq \blacksquare \neg a$ & \\
iff & $\neg \blacksquare \neg a \leq \Diamond a = \bigwedge \{b\ |\ a \prec b\}$ &  $\neg$ antitone and involutive \\
iff & $\neg \blacksquare \neg a \leq b$ for all $b\in A$ s.t.~$a \prec b$ &  \\
iff & $\neg b \leq \blacksquare \neg a$ for all $b\in A$ s.t.~$a \prec b$ & $\neg$ antitone and involutive \\
iff & $\neg b \prec \neg a$ for all $b\in A$ s.t.~$a \prec b$ & Lemma \ref{lem: diamond-output equivalence} (iii), (SI), (UD) \\
\end{tabular}
\end{center}

\begin{center}
\begin{tabular}{rll}
& $ \Diamond \neg b \leq \neg \blacksquare  b$ & \\
iff & $   \bigvee \{a\ |\ a \prec b\} = \blacksquare b\leq \neg \Diamond \neg  b$ &  $\neg$ antitone and involutive \\
iff & $a\leq \neg \Diamond \neg  b$ for all $a\in A$ s.t.~$a \prec b$ &  \\
iff & $\Diamond \neg b \leq  \neg a$ for all $a\in A$ s.t.~$a \prec b$ & $\neg$ antitone and involutive \\
iff & $\neg b \prec \neg a$ for all $a\in A$ s.t.~$a \prec b$ & Lemma \ref{lem: diamond-output equivalence} (ii), (WO), (DD) \\
\end{tabular}
\end{center}
 Since $\neg$ is involutive, condition (S6) can equivalently be rewritten as $\forall a\forall b(a\prec \neg b\Rightarrow b\prec \neg a)$ and as $\forall a\forall b( \neg a\prec  b\Rightarrow  \neg b\prec a)$. Hence:
 \begin{center}
\begin{tabular}{rll}
& $\neg \blacksquare  a\leq  \Diamond \neg a = \bigwedge\{b\mid \neg a\prec   b\}  $ & \\
iff & $\neg \blacksquare  a \leq b$ for all $b\in A$ s.t.~$ \neg a \prec b$ &  \\
iff & $\neg b \leq \blacksquare  a$ for all $b\in A$ s.t.~$a \prec b$ & $\neg$ antitone and involutive \\
iff & $\neg b \prec  a$ for all $b\in A$ s.t.~$\neg a \prec b$ & Lemma \ref{lem: diamond-output equivalence} (iii), (SI), (UD) \\
\end{tabular}
\end{center}

\begin{center}
\begin{tabular}{rll}
  & $   \bigvee \{a\ |\ a \prec \neg b\} = \blacksquare \neg b \leq \neg \Diamond b$ &  
  \\
iff & $ a \leq \neg \Diamond b$ for all $b\in A$ s.t.~$a \prec\neg  b$ &  \\
iff & $ \Diamond b \leq  \neg a$ for all $b\in A$ s.t.~$a \prec\neg b$ & $\neg$ antitone and involutive \\
iff & $ b \prec \neg a$ for all $b\in A$ s.t.~$a \prec \neg b$ & Lemma \ref{lem: diamond-output equivalence} (ii), (WO), (DD) \\
\end{tabular}
\end{center}

(vii)(a) From left to right, let $a, b\in A$. If ${\prec}^{-1}[b] = \varnothing$, then $\blacksquare b = \bigvee \varnothing = \bot$, hence $\blacksquare (a\vee \blacksquare b) =\blacksquare (a\vee \bot) = \blacksquare a = \blacksquare a\vee \bot = \blacksquare a\vee \blacksquare b$, as required. If  ${\prec}^{-1}[b] \neq \varnothing$, then
by definition, $a\vee \blacksquare b =\bigvee\{a\vee e\mid e\prec b\} \in O(A^\delta)$ since, by 
(UD), $\{a\vee e\mid e\prec b\}$ is up-directed. Hence:
\begin{center}
\begin{tabular}{rcll}
 $\blacksquare(a \vee \blacksquare b)$&$=$& $\bigvee\{ \blacksquare d\mid d \leq a \vee \blacksquare b\}$& definition of $\blacksquare$ on $O(A^\delta)$\\
 &$=$& $\bigvee\{c \mid \exists d(c\prec d \ \&\ d \leq a \vee \blacksquare b)\}$& definition of $\blacksquare$ on $A$\\
 &&\\
 $\blacksquare a \vee \blacksquare b$&$=$& $ \bigvee\{y\ |\ y \prec a\} \vee \bigvee \{z\ |\ z \prec b\}$\\
 &$=$& $ \bigvee\{y\vee z\mid  y \prec a \text{ and } z \prec b\}$.\\
\end{tabular}
\end{center}
Hence, to  show that $\blacksquare (a \vee \blacksquare b) \leq \blacksquare a \vee \blacksquare b$, it is enough to show that if $c\in A$ is s.t.~$\exists d(c\prec d \ \&\ d \leq a \vee \blacksquare b)$, then $c\leq y\vee z$ for some $y, z\in A$ s.t.~$  y \prec a$  and  $z \prec b$. By compactness, $d \leq a \vee \blacksquare b$ implies that $d\leq a\vee e$ for some $e\in A$ s.t.~$e\prec b$. By (WO), $c\prec d\leq a\vee e$ implies $c\prec a\vee e$. Summing up,  $\exists e (e\prec b\ \&\ c\prec a\vee e)$. Hence, by (S9 $\Rightarrow$), $\exists a'\exists b'(a'\prec a\ \&\ b'\prec b\ \&\ c\leq a'\vee b')$, which is the required condition for $y: = a'$ and $z: = b'$.

Conversely, let $x, a, b \in A$ s.t.~$c\prec b$ and $x \prec a \vee c$ for some $c\in A$. By Lemma  \ref{lem: diamond-output equivalence}(i), $x \prec a \vee c$ implies that $x\leq \blacksquare(a \vee c)$, and $c\prec b$ implies that $c \leq \blacksquare b$. Hence, the monotonicity of $\blacksquare$ (which is guaranteed by (WO), cf.~Lemma \ref{lem: rleation to axioms}(ii)(a)) and the assumption imply that the following chain of inequalities holds: $x\leq \blacksquare(a \vee c)\leq \blacksquare(a \vee \blacksquare b)\leq \blacksquare a \vee \blacksquare b = \left (\bigvee{\prec}^{-1}[a]\right )\vee \left (\bigvee{\prec}^{-1}[b]\right ) = \bigvee\{a'\vee b'\mid a'\prec a\text { and }b'\prec b\}$. 

We claim that $U: =\{a'\vee b'\mid a'\prec a\text { and }b'\prec b\}$ is up-directed: indeed,   if $a'_i\vee b'_i\in U$ for $1\leq i\leq 2$, then $a'_i\prec a$ and $b'_i\prec b$ imply by  (UD) that $a'\prec a$ and $b'\prec b$ for some $a', b'\in A$ s.t.~$a'_i\leq a'$ and $b'_i\leq b'$, hence $a'_i\vee b'_i\leq a'\vee b'\in U$.
Hence, by compactness, $x\leq a'\vee b'$ for some $a', b'\in A$ s.t.~$a'\prec a$ and $b'\prec b$, as required.

(vii)(b) From left to right, let $a, b\in A$. By the definitions spelled out in the proof of (vii)(a), to show that $\blacksquare a\vee \blacksquare b\leq \blacksquare(a\vee \blacksquare b)$, it is enough to show that for all $x, y\in A$, if $y\prec a$ and $z\prec b$, then $y\vee z\prec d$ for some $d\in A$ s.t.~$d\leq a\vee e$ for some $e\in A$ s.t.~$e\prec b$. By (S9 $\Leftarrow$), $y\vee z\prec a\vee c$ for some $c\in A$ s.t.~$c\prec b$. Then the statement is verified for $d: = a\vee c$ and $e: = c$.

Conversely, let $a, b, x\in A$ s.t.~$a'\prec a$, $b'\prec b$ and $x\leq a'\vee b'$ for some $a', b'\in A$, and let us show that $x\prec a\vee c$ for some $c\in A$ s.t.~$c\prec b$.
By Lemma \ref{lem: diamond-output equivalence} (i), the assumptions imply that the following chain of inequalities holds: $x\leq a'\vee b'\leq \blacksquare a\vee \blacksquare b\leq \blacksquare (a\vee \blacksquare b) = \bigvee\{e\mid \exists d(e\prec d\ \& \ d\leq a\vee \blacksquare b)\}$, the last identity being  discussed in the proof of (vii)(a). 

We claim that the set $U:=\{e\mid \exists d(e\prec d\ \& \ d\leq a\vee \blacksquare b)\}$ is up-directed: indeed, if  $e_i\prec d_i$ for  some $d_i\in A$ s.t.~$d_i\leq a\vee \blacksquare b$ where $1\leq i\leq 2$, then by (WO), $e_i\prec d_i\leq d_1\vee d_2$ implies $e_i\prec d_1\vee d_2$, hence by (UD), $e\prec d_1\vee d_2$ for some $e\in A$ s.t.~$e_i\leq e$, and finally, $e\in U$, its witness being $d: = d_1\vee d_2\leq a\vee\blacksquare b$.

Hence, by compactness, $x\leq e$ for some $e\in A$ s.t.~$e\prec d$ for some $d\in A$ s.t.~$d\leq a\vee \blacksquare b= \bigvee\{a\vee c\mid c\prec b\}$. Again by compactness (which is applicable because, as discussed in the proof of (vii)(a), $\{a\vee c\mid c\prec b\}$ is up-directed), $d\leq a\vee c$ for some $c\in A$ s.t.~$c\prec b$. Hence, by (WO) and (SI), $x\leq e\prec d\leq a\vee c$ implies $x\prec a\vee c$, as required.

(vii)(c) From left to right, let $a, b, \in A$. By definition, $\blacksquare a\vee\blacksquare b = \bigvee\{x\vee y\mid x\prec a \text{ and }y\prec b\}\in O(A^\delta)$, since $\{x\vee y\mid x\prec a \text{ and }y\prec b\}$ is up-directed, as discussed in the proof of (vii)(a). Then,
\begin{center}
    \begin{tabular}{r cll}
    $\blacksquare( \blacksquare a\vee\blacksquare b)$ &$=$& $\bigvee \{\blacksquare c\mid c\leq \blacksquare a\vee\blacksquare b\}$ & definition of $\blacksquare$ on $O(A^\delta)$\\
    &$=$& $\bigvee \{d\mid \exists c(d\prec c\ \&\ c\leq \blacksquare a\vee\blacksquare b)\}$. &  $\blacksquare c: = \bigvee \{d\mid d\prec c\}$ \\
    \end{tabular}
\end{center}
Hence, to prove that $\bigvee\{d\mid d\prec a\vee b \} = \blacksquare (a \vee b) \leq \blacksquare (\blacksquare a \vee \blacksquare b)$, it is enough to show that if $d\prec a\vee b$, then $d\prec c$ for some $c\in A$ s.t.~$c\leq \blacksquare a\vee\blacksquare b$.  By (SL1), $d\prec a\vee b$ implies that $d\prec a'\vee b'$ for some $a', b'\in A$ s.t.~$a'\prec a$ and $b'\prec b$. Since $a'\leq \blacksquare a$ and $b'\leq \blacksquare b$, The statement is verified for $c: = a'\vee b'$.

Conversely, let $a, b, c\in A$ s.t.~$c\prec a\vee b$, and let us show that $c\prec a'\vee b'$ for some $a', b'\in A$ s.t.~$a'\prec a$ and $b'\prec b$. From $c\prec a\vee b$ it follows that $c\leq\blacksquare( a\vee b)\leq \blacksquare (\blacksquare a \vee \blacksquare b) = \bigvee \{d\mid \exists e(d\prec e\ \&\ e\leq \blacksquare a\vee\blacksquare b)\}$.

We claim that $U: = \{d\mid \exists e(d\prec e\ \&\ e\leq \blacksquare a\vee\blacksquare b)\}$ is up-directed: indeed, if $d_i\prec e_i$ and $e_i\leq \blacksquare a\vee \blacksquare b$, then $e_1\vee e_2\leq \blacksquare a\vee \blacksquare b$, and by (WO), $d_i\prec e_i\leq e_1\vee e_2$ implies that $d_i\prec e_1\vee e_2$, hence, by (UD), $d\prec e_1\vee e_2$ for some $d\in A$ s.t.~$d_i\leq d$, and $d\in U$, its witness being $e:=e_1\vee e_2$.

Hence, by compactness, $c\leq d$ for some $d\in A$ s.t.~$d\prec e$ for some $e\in A$ s.t.~$e\leq \blacksquare a\vee\blacksquare b= \bigvee\{x\vee y\mid x\prec a \text{ and }y\prec b\}$. Since, s discussed above, $\{x\vee y\mid x\prec a \text{ and }y\prec b\}$ is up-directed, by compactness, $e\leq a'\vee b'$ for some $a', b'\in A$ s.t.~$a'\prec a$ and $b'\prec b$. By (SI) and (WO), $c\leq d\prec e\leq a'\vee b'$ implies $c\prec a'\vee b'$, as required.
\end{proof}
\bibliographystyle{abbrv}
\bibliography{ref}

\end{document}